\newcommand{\real}{\mathbb{R}}
\newcommand{\complex}{\mathbb{C}}
\newcommand{\rn}{\real^N}
\newcommand{\intr}{\int_{\real}}
\newcommand{\intoi}{\int_0^\infty}
\newcommand{\eps}{\varepsilon}
\newcommand{\ep}{\epsilon}
\newcommand{\al}{\alpha}
\newcommand{\ffi}{\varphi}
\newcommand{\lam}{\lambda}
\newcommand{\ly}{\lambda_\infty}
\newcommand{\ls}{\lambda_*}
\newcommand{\e}{\mathrm{e}}
\newcommand{\la}{\langle}
\newcommand{\ra}{\rangle}
\newcommand{\wt}{\widetilde}
\newcommand{\diff}{\,\mathrm{d}}
\newcommand{\dif}{\mathrm{d}}
\newcommand{\Ve}{\Vert}
\newcommand{\ve}{\vert}
\renewcommand\emptyset{\mbox{\Large \o}}
\newcommand{\sm}{\setminus}
\newcommand{\x}{\times}
\newcommand{\les}{\leqslant}
\newcommand{\ges}{\geqslant}
\newcommand{\p}{\partial}
\newcommand{\disp}{\displaystyle}
\DeclareMathOperator \re{Re}
\newtheorem{theorem}{Theorem}[section]
\newtheorem{lemma}[theorem]{Lemma}
\newtheorem{proposition}[theorem]{Proposition}
\newtheorem{definition}[theorem]{Definition}
\newtheorem{remark}[theorem]{Remark}
\newtheorem{example}[theorem]{Example}
\numberwithin{equation}{section}
\begin{document}

\title[Orbitally stable standing waves]
{Orbitally stable standing waves for the asymptotically linear 
one-dimensional NLS}
\author{Fran\c cois Genoud}

\date{20 November 2012}
\thanks{This work was supported by the Engineering and Physical Sciences 
Research Council, [EP/H030514/1].}

\subjclass[2000]{35Q55, 35B32, 35B35}
\keywords{asymptotically linear NLS, global bifurcation, orbital stability,
self-focusing waveguide, saturable refractive index}

\address{Department of Mathematics and the Maxwell Institute
for Mathematical Sciences \\ Heriot-Watt University \\
Edinburgh \\ EH14 4AS \\ Scotland.}
\email{F.Genoud@hw.ac.uk}

\begin{abstract}
In this article we study the one-dimensional, asymptotically linear, 
non-linear Schr\"odinger equation (NLS). We show the existence of a global
smooth curve of standing waves for this problem, and we prove that these
standing waves are orbitally stable. As far as we know, 
this is the first rigorous stability 
result for the asymptotically linear NLS. We also discuss an application
of our results to self-focusing waveguides with a saturable refractive index.
\end{abstract}

\maketitle

\section{Introduction}

In this article we study the one-dimensional 
nonlinear Schr\"odinger equation
\begin{equation}\label{NLS}\tag{NLS}
i\partial_t\psi + \partial^2_{xx}\psi + f(x,|\psi|^2)\psi = 0
\end{equation}
for $\psi=\psi(t,x):[0,\infty)\times\real\to\complex$.
We suppose that $f\in C^1(\real\times\real_+,\real)$, and satisfies 
the following assumptions.
\medskip
\begin{itemize}
\item[\bf(A0)] $f(x,0)=0$ for all $x\in\real$, and we have
\begin{align}\label{f0as}
\lim_{s\to0}f(x,s)&=0, \ \text{uniformly for $x\in\real$},\\
\lim_{|x|\to\infty}f(x,s)&=0, \ \text{uniformly for $s\ges0$}.\label{finfas}
\end{align}
\item[\bf(AL)]
There exists 
$f_\infty \in C(\real)\cap L^\infty(\real)$ such that
\begin{equation}\label{as.eq}
\lim_{s\to\infty}f(x,s)=f_\infty(x), \ \text{uniformly for} \ x\in\real.
\end{equation}
\end{itemize}

Equation \eqref{NLS} with the assumption (AL) is usually referred to as the 
{\em asymptotically linear} nonlinear Schr\"odinger equation.
The structure of the nonlinearity allows one to look for 
{\em standing wave} solutions 
$$
\psi(t,x)=\e^{i\lam t}u(x), \quad
\text{where} \quad \lam>0 \quad \text{and} \quad u:\real\to\real.
$$
As we shall see from the stability analysis in Section~\ref{stability.sec},
solutions of this type enjoy remarkable properties with respect to the
general dynamics of \eqref{NLS}.
Using this Ansatz to solve \eqref{NLS} yields the stationary
equation
\begin{equation}\label{stat}\tag{SNLS}
u'' + f(x,u^2)u = \lambda u.
\end{equation}
This second order ordinary differential equation will be interpreted as 
a nonlinear eigenvalue problem, which will be addressed via bifurcation theory.

By a solution to \eqref{stat} will be meant a couple 
$(\lam,u)\in \real\x H^1(\real)$ satisfying \eqref{stat} in the sense
of distribution. 
Note that, since $f(x,0)\equiv0$, we
have a line of {\em trivial solutions}, 
$\{(\lam,0):\lam\in\real\}\subset\real\x H^1(\real)$.
We will prove that there exist non-trivial solutions,
bifurcating from the line of trivial solutions.
Under appropriate hypotheses on $f$, in particular
assuming that $f(x,s)$ is positive and even in $x$,   
we will obtain a smooth curve of positive even solutions of \eqref{stat},
\begin{equation}\label{curve}
\{(\lam,u(\lam)):\lam\in(0,\ly)\}\subset\real\x H^1(\real).
\end{equation}
The number $\ly\in(0,\infty)$ will be characterized as 
the principal eigenvalue of the linear problem
\begin{equation}\label{lin}
u''+f_\infty(x)u=\lambda u,
\end{equation}
known as the {\em asymptotic linearization}.

Additional properties of the solutions will be proved, in particular their
bifurcation behaviour:
$$
\lim_{\lam\to0}\Ve u(\lam)\Ve_{H^1(\real)}=0 \quad\text{and}\quad
\lim_{\lam\to\ly}\Ve u(\lam)\Ve_{H^1(\real)}=\infty.
$$
The curve \eqref{curve} will be obtained, first by a local bifurcation
analysis near $u=0$, then by analytic continuation and an asymptotic analysis
as $\Ve u(\lam)\Ve_{H^1}\to\infty$. Bifurcation
from the line of trivial solutions is difficult in the present context since,
under assumption \eqref{f0as}, the linearization of \eqref{stat} at $u=0$ is
\begin{equation}\label{linat0}
u''=\lam u,
\end{equation}
and has {\em purely continuous spectrum}. 

In Subsection~\ref{start.sec}, prescribing the behaviour of $f(x,s)$ as
$s\to0$, we will obtain bifurcation of 
{\em positive} solutions of \eqref{stat}, from the bottom of
the continuous spectrum of \eqref{linat0} (i.e. from $\lam=0$), 
by applying a fairly involved perturbation analysis, introduced in \cite{g1}.
This local result is contained in Theorem~\ref{local.thm}.

Under additional assumptions (in particular
assuming that $f(x,s)$ is positive and even in $x\in\real$),
a global analysis is carried out in Subsection~\ref{global.sec}, where we
show that the local branch given by Theorem~\ref{local.thm} can be
extended in a smooth manner to the curve \eqref{curve}. The global
continuation relies on the non-degeneracy of positive even solutions of
\eqref{stat} given by Lemma~\ref{nondegen.lem}. The asymptotic behaviour
of the branch as $\Ve u(\lam)\Ve_{H^1}\to\infty$ (and $\lam\to\ly$)
follows by the asymptotic bifurcation analysis that was developped in 
\cite{g3} using topological arguments. Our global result is 
Theorem~\ref{global.thm}.	

In \cite{g3} (where we only considered
\eqref{stat} for $x>0$) we obtained global bifurcation
from $(\ly,\infty)$ in $\real\times H^2(0,\infty)$. More precisely, 
we obtained a global connected set of solutions $(\lam,u)$ such that
$\Ve u\Ve_{H^2}\to\infty$ as $\lam\to\ly$. This
was later extended in \cite{g4} to a higher dimensional
version of \eqref{stat}, under fairly weak hypotheses,
providing a strong existence result for positive solutions of
\eqref{stat} with assumption (AL). Previous contributions on 
this problem --- see e.g. \cite{ct,j,jt,sz,sz96,sz99,sz05,zz} and the references in these
papers --- mostly used variational methods to prove existence of solutions 
for \eqref{stat}-(AL), under various assumptions, typically stronger
than those required by the topological approach. 

However, due to technical restrictions of the method used 
in \cite{g3}, we were not able to continue the bifurcating branch down
to the line of trivial solutions. Section~\ref{bifurcation.sec} below 
completes the discussion initiated in \cite{g3} by showing that, 
under more restrictive assumptions (in particular, symmetry and monotonicity
conditions on $f(x,s)$, as well as a precise asymptotic behaviour as $s\to0$),
there exists a smooth curve of solutions, connecting $(0,0)$ to $(\ly,\infty)$
in $\real\times H^1(\real)$.

A global solution curve was obtained by Jeanjean and Stuart \cite{js}
under similar hypotheses. However, they consider an
additional, non-trivial, linear potential in \eqref{stat}. In our notation,
this amounts
to assuming that $f_0(x):= f(x,0)\not\equiv0$ instead of $f(x,0)\equiv0$
in (A0). In this case,
the linearization at $u=0$ has the form
\begin{equation}\label{linat0'}
u''+f_0(x)u=\lam u.
\end{equation}
Under appropriate assumptions on $f_0$ (e.g. $f_0$ is a `bump'), the linear 
problem \eqref{linat0'} has a principal eigenvalue, from which bifurcation can
be obtained via standard bifurcation theory. Global continuation can then be 
obtained by arguments similar to those of Subsection~\ref{global.sec}. The
authors of \cite{js} also discussed the case of an asymptotically linear
nonlinearity, as in \eqref{as.eq}. The main difference in the present context
is that bifurcation at $u=0$ occurs from the bottom of the continous spectrum
of the linearization \eqref{linat0}.

In Section~\ref{stability.sec} we will consider the
standing wave solutions 
$\psi_\lam(t,x)=\e^{i\lam t}u(\lam)(x)$ of \eqref{NLS} corresponding to
the solutions \eqref{curve} of \eqref{stat}. 
We will prove that they are {\em orbitally stable} amongst 
the set of solutions
$\psi(t,x)\in C\big([0,\infty),H^1(\real,\complex)\big)$. This result,
Theorem~\ref{stability.thm}, is based on the general theory of orbital
stability for Hamiltonian systems, see \cite{gss,stuart2008}. Given a
standing wave $\psi_{\lam_0}$, it follows from the theory that,
under
appropriate conditions on the spectrum of the linearization of \eqref{NLS}
at $\psi_{\lam_0}$, this standing wave is orbitally stable
if the mapping $\lam\to\Vert u(\lam)\Vert_{L^2}^2$ is increasing 
at the point $\lam=\lam_0$. Since we have a smooth 
curve of solutions, this can be obtained by checking that
\begin{equation}\label{slope.eq}
\frac{\dif}{\dif\lam}\Big|_{\lam=\lam_0}\Vert u(\lam)\Vert_{L^2}^2>0.
\end{equation}
Our bifurcation analysis in Section~\ref{bifurcation.sec}
shows that $\Vert u(\lam)\Vert_{L^2}\to0$ as $\lam\to0$, and so \eqref{slope.eq}
must hold for some $\lam_0\in(0,\ly)$. 
Hence, by continuity, we need only check that
$$
\frac{\dif}{\dif\lam}\Vert u(\lam)\Vert_{L^2}^2\neq0, \quad\text{for all} \ 
\lam\in(0,\ly).
$$
This is done in Subsection~\ref{slope.sec}, using an 
integral identity that was first derived in \cite{mst}
to study orbital stability along
the solution curve obtained in \cite{js}. However, the authors 
of \cite{mst} were not able to deal with the asymptotically linear case.
In fact, a careful inspection of the proof of \cite[Theorem~2.1]{mst}
shows that the non-trivial potential $f_0$ in \eqref{linat0'}
obstructs the argument under assumption \eqref{as.eq}. In the
context of (A0) (i.e. with $f_0\equiv0$), we can prove that
condition \eqref{slope.eq} is verified under assumption \eqref{as.eq}, for all
$\lam_0\in(0,\ly)$. This result
(Proposition~\ref{slope.prop}) and the spectral conditions
(Proposition~\ref{spectral.prop}) yield the stability of the
standing waves $\psi_\lam(t,x)=\e^{i\lam t}u(\lam)(x)$, for all
$\lam\in(0,\ly)$ (Theorem~\ref{stability.thm}). 
To the best of our knowledge, Theorem~\ref{stability.thm}
is the first rigorous stability result for the asymptotically linear
Schr\"odinger equation.	

Lastly, Section~\ref{physics} is devoted to an application in nonlinear
optics. Following our study in \cite{g2} of self-focusing planar waveguides 
in Kerr media, the results of Sections~\ref{bifurcation.sec} and
\ref{stability.sec} under assumption (AL) 
now allow us to discuss the existence and stability
of TE travelling waves in materials having a saturable
dielectric response.

\subsection{Open problems}
Various problems remain unsolved in the higher
dimensional setting. For instance, the global analytic continuation carried out
in Subsection \ref{global.sec} makes use of the uniqueness of positive even solutions to \eqref{stat}
(for each fixed $\lam$). As far as we know, the uniqueness problem in higher dimension is still open.
Moreover, due to a lack of compactness coming from the unboundedness of the domain,
the analytic continuation seems hard to obtain without uniqueness.

In dimension $N\ges3$, 
the orbital stability of standing waves along a {\em local} branch of solutions
--- such as that obtained in Theorem~\ref{local.thm} --- follows from
\cite[Theorem~1~(b)]{g1}. Nonetheless, even if a smooth {\em global} 
curve of positive radial solutions existed in the radial case,
our method to prove stability along the whole curve might not work 
in dimension $N>1$ because of an extra term of the form
$\tfrac{N-1}{r}u'(r)$ in the radial (higher dimensional) version
of \eqref{stat}, coming from the expression of Laplacian in polar coordinates. 
We have previously failed to handle this problem in the simpler case
of the power-type nonlinearity considered in \cite{gs}.

%%%%%%%%%%%%%%%%%%%%%%%%%%%%%%%%%%%%%%%%%%%%%%%%%%%%%%%%%%%%%%%%%%%

\subsection{The prototype}
The function $f$ will be required to satisfy
numerous structural and technical assumptions in order to
establish our results. It may be helpful to keep in mind the following 
typical example.

\begin{example}\label{ex1}
\rm
Under appropriate conditions on $V:\real\to\real$ and $\alpha>0$,
the function $f:\real\x\real_+\to\real$ defined by
\begin{equation}\label{ex.eq}
f(x,s):=V(x)\frac{s^\alpha}{1+s^\alpha}
\end{equation}
will satisfy all of our assumptions. We will state these conditions in due
course, to illustrate the general case.
\end{example} 

\noindent
{\bf Terminology and notation.} For brevity, we will often refer to 
properties of solutions $(\lam,u)$ of \eqref{stat}, e.g. positivity, 
evenness etc., while actually meaning that $u$ possesses these properties.

We will work in both the real and the complex Sobolev spaces 
$H^1(\real,\complex)$ and $H^1(\real,\real)$, depending on whether we consider
\eqref{NLS} or \eqref{stat}, respectively. When no confusion is possible, we
will merely write $H^1(\real)$, and similarly for $L^q(\real)$, $H^2(\real)$,
etc. All of these spaces will be regarded as real Banach spaces, endowed with 
their usual inner products and norms.

The symbol $C$ will denote various positive constants, the exact value of which
does not play an essential role in the analysis.

%%%%%%%%%%%%%%%%%%%%%%%%%%%%%%%%%%%%%%%%%%%%%%%%%%%%%%%%%%%%%%%%%%%%%%%%
%%%%%%%%%%%%%%%%%%%%%%%%%%%%%%%%%%%%%%%%%%%%%%%%%%%%%%%%%%%%%%%%%%%%%%%%

\section{A global curve of solutions}\label{bifurcation.sec}

Our approach in this section will be based on previous results
\cite{gs,g1,g2} about bifurcation for semilinear equations in $\rn$.
Firstly, in Subsection~\ref{start.sec}, we will show that a {\em local} smooth
branch of solutions of \eqref{stat} bifurcates from the line of trivial 
solutions at the point $(0,0)\in\real\x H^1(\real)$. This will be based
on similar results to those of \cite{g1}, where local bifurcation and
stability results are established for standing waves of the NLS in dimension
$N\ges3$. Under appropriate symmetry and monotonicity assumptions,
we will then show in Subsection~\ref{global.sec} that a version of the 
implicit function theorem can be applied at any positive even solution of 
\eqref{stat}, thereby ensuring global continuation of the local branch. 
The asymptotic bifurcation results of \cite{g3} will then allow us to 
discuss the asymptotic behaviour as $\lam\to\ly$.

%%%%%%%%%%%%%%%%%%%%%%%%%%%%%%%%%%%%%%%%%%%%%%%%%%%%%%%%%%%%%%%%%%%

\subsection{Bifurcation of small solutions}\label{start.sec}

In \cite{g1} we proved local bifurcation and stability results for the NLS
in dimension $N\ges3$. The nonlinearities we considered in \cite{g1} can be
written as perturbations --- in a sense that will be made more precise 
below --- of the signed-power nonlinearity
\begin{equation}\label{g}
g(x,s):=V(x)|s|^{p-1}s, \quad p>1,
\end{equation}
with $V\in C^1(\rn)$. The main hypotheses about $g$ involve a 
parameter $b\in(0,2)$. Roughly speaking,
it is required that $V(x)\sim |x|^{-b}$
as $|x|\to\infty$ and that the problem be `subcritical', in the sense that
$p<1+\frac{4-2b}{N-2}$. We will formulate the exact hypotheses in the
one-dimensional setting below, but let us already mention two differences from
the case where $N\ges3$. Firstly, if $N=1$, we must impose $b\in(0,1)$. This 
is a requirement of the variational formulation of a limit problem involving
the coefficient $|x|^{-b}$. Secondly, in dimensions $N=1,2$, the problem
is `subcritical' for all $p>1$, so we can dispose of the above upper bound.
However, we will only be interested here in the case of bifurcation
from the line of trivial solutions, while more general situations are
considered in \cite{g1}, allowing for asymptotic bifurcation 
(i.e. bifurcation from $(0,\infty)$ in $\real\times H^1(\rn)$)
as well. This restriction will impose another upper bound on $p>1$, namely
$p<5-2b$. As can be seen from Example~\ref{ex4}, 
this condition is also essential to
the stability of the standing waves of \eqref{NLS}.

We will now state the one-dimensional version of the bifurcation
result of \cite{g1}. 
It is convenient to define
\begin{equation}\label{wtf}
\wt{f}(x,s):=f(x,s^2)s, \quad x,s \in \real.
\end{equation}
We then suppose that
\begin{equation}\label{perturb}
\wt{f}(x,s)=g(x,s)+r(x,s),
\end{equation}
where $g$ is defined in \eqref{g}, $V\in C^1(\real)$ satisfies
\begin{equation}\label{V}
\lim_{|x|\to\infty}|x|^bV(x)=1 \quad\text{and}\quad
\lim_{|x|\to\infty}|x|^bxV'(x)=-b,
\end{equation}
for some $b\in(0,1)$, and
the rest $r$, {\em defined by} \eqref{perturb},
is `small' in a precise, technical sense.
As in \cite{g1}, we are dealing here with situations where the 
linearization of \eqref{stat} at $u=0$ has purely continuous spectrum. 
The method we used in \cite{g1} to
get bifurcation from the continuous spectrum is by perturbation of the model
nonlinearity $g$ that was considered earlier in \cite{gs}. 

A fairly technical 
method was developped in \cite{gs}, based on a rescaling and 
a perturbative argument, using a limit equation involving the nonlinearity
$g$ with $V(x)=|x|^{-b}$. Thus, via continuation from this limit problem,
the asymptotic behaviour of $V$ as $|x|\to\infty$
turns out to govern the local bifurcation from $\lam=0$.
Hypotheses about the rest $r$ are formulated in \cite{g1} ---
see \cite[(r1)-(r5)]{g1} ---,
ensuring that the perturbed nonlinearity retains the main 
properties of $g$ for small $|s|$, and the same asymptotic behaviour
under scaling, in the limit $\lam\to0$. 

In the present context, having 
\eqref{g} to \eqref{V} in mind, 
we will formulate these assumptions in the one-dimensional setting
directly in terms of $\wt{f}$. 
Note that the bifurcation analysis for the model
nonlinearity $g$ was carried out in \cite{g2} in dimension $N=1$, similarly
to the higher dimensional problem treated in \cite{gs}.

Let us finally remark that singularities at $x=0$ were allowed in 
\cite{gs,g1,g2}. We will not need to handle singularities here, and so the
present hypotheses are formulated in a slightly different manner.

\medskip
\begin{itemize}
\item[\bf(A1)] $\wt{f} \in C^1(\real^2)$\footnote{This follows from 
the assumption made in the introduction that 
$f\in C^1(\real\times\real_+,\real)$, but we state it here for completeness.}, 
$\wt{f}(x,\cdot)\in C^2(\real\sm\{0\})$ and 
$\partial_1 \wt{f}(x,\cdot)\in C^1(\real)$
for all $x\in\real$.
\item[\bf(A2)] There exists $b\in(0,1)$, $p\in(1,5-2b)$ and 
$s_0\in(0,\tfrac12]$ such that, \\ for  $0<|s| \les 2s_0$:
$$
|s\partial^2_{22}\wt{f}(x,s)|\les 
\begin{cases}
C |s|^{p-1}, & |x|\les 1,\\
C |x|^{-b}|s|^{p-1}, & |x|\ges 1;
\end{cases}
$$
and
\begin{equation}\label{p1f}
|x\partial^2_{21}\wt{f}(x,s)|\les 
\begin{cases}
C |s|^{p-1}, & |x|\les 1,\\
C |x|^{-b}|s|^{p-1}, & |x|\ges 1.
\end{cases}
\end{equation}
\item[\bf(A3)] Setting $\theta:=(2-b)/(p-1)$, we have
$$
\lim_{k\to0^+}k^{-2}\partial_2\wt{f}\left(\frac{x}{k},k^\theta s\right)=
p|x|^{-b}|s|^{p-1}, \quad\text{for all} \ x\in\real\sm\{0\}, \ s\in\real;
$$
$$
\lim_{k\to0^+}k^{-(2+\theta)}
\left(\frac{x}{k}\right)\partial_1\wt{f}\left(\frac{x}{k},k^\theta s\right)=
-b|x|^{-b}|s|^{p-1}s, \quad\text{for all} \ x\in\real\sm\{0\}, \ s\in\real.
$$
\end{itemize}

\medskip
As usual, we have denoted by $\p_1f$ and $\p_2f$ the partial derivatives of
$f$ with respect to its first and second arguments, respectively.

\medskip
For brevity, we will refer from now on to the collection of assumptions
(A0), (A1), (A2) and (A3) as {\bf (A)}.

\begin{remark}
\rm
Note that the conditions 
$\wt{f}(x,0)=\partial_1\wt{f}(x,0)=\partial_2\wt{f}(x,0)=0, \ x\in\real$,
related to the corresponding conditions on $r$ in \cite[(r1)]{g1},
are automatically satisfied provided that $f$ satisfies (A0).
\end{remark}

\begin{example}\label{ex2}
\rm
The nonlinearity corresponding to the function $f$ defined by \eqref{ex.eq}
is
$$
\wt{f}(x,s)=V(x)\frac{|s|^{2\al}s}{1+|s|^{2\al}}, \quad x,s\in\real.
$$
We suppose that $V\in C^1(\real)$ and that there exists $b\in(0,1)$ such that  
$V$ satisfies \eqref{V}, and we let $\al=(p-1)/2$. 
Then assumption (A) is satisfied if $0<\al<2-b$.
\end{example}

\begin{theorem}\label{local.thm}
Let $\wt{f}$ be defined by \eqref{wtf} and suppose that (A) holds.
There exist $\lam_0>0$ and a function 
$u_0\in C^1((0,\lam_0),H^1(\real))$ such that, for all $\lam\in(0,\lam_0)$,
$(\lam,u_0(\lam))$ is a solution of \eqref{stat} with 
$u_0(\lam)\in C^2(\real)$, $u_0(\lam)>0$ on $\real$, and
$u_0(\lam)(x),u_0(\lam)'(x)\to0$ exponentially as $|x|\to\infty$.
Furthermore, 
\begin{equation}\label{local.eq}
\lim_{\lam\to0}\Ve u_0(\lam)\Ve_{L^2(\real)}=
\lim_{\lam\to0}\Ve u_0'(\lam)\Ve_{L^2(\real)}=
\lim_{\lam\to0}\Ve u_0(\lam)\Ve_{L^\infty(\real)}=0.
\end{equation}
\end{theorem}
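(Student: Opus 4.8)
\textbf{Proof strategy for Theorem~\ref{local.thm}.}
The plan is to reduce the problem to the abstract local bifurcation result of \cite{g1} by verifying that the one-dimensional nonlinearity $\wt{f}$, written as in \eqref{perturb}--\eqref{V}, falls within the scope of that framework. Concretely, the first step is to \emph{set up the perturbative splitting}: define $g(x,s)=V(x)|s|^{p-1}s$ with the $V$ satisfying \eqref{V}, and $r(x,s):=\wt{f}(x,s)-g(x,s)$, and then check that the technical smallness/regularity hypotheses of \cite{g1} on the rest $r$ --- the conditions \cite[(r1)-(r5)]{g1}, rephrased in terms of bounds on $r$, $\partial_1 r$, $\partial_2 r$, $\partial^2_{22}r$, $\partial^2_{21}r$ near $s=0$ --- are exactly encoded by (A1), (A2), (A3). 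Here (A1) gives the needed $C^1$/$C^2$ regularity, (A2) gives the pointwise decay estimates with the weight $|x|^{-b}$ for $|x|\ges 1$, and (A3) is the scaling-limit condition ensuring that under the rescaling $u(x)\mapsto k^{\theta}u(x/k)$ with $\theta=(2-b)/(p-1)$, the nonlinearity $\wt{f}$ converges to the model nonlinearity $p|x|^{-b}|s|^{p-1}$ that drives the bifurcation. The Remark already notes that $\wt{f}(x,0)=\partial_1\wt{f}(x,0)=\partial_2\wt{f}(x,0)=0$, which is the $s=0$ part of (r1).

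The second step is to \emph{invoke the abstract theorem}. Once $\wt{f}$ is recognized as an admissible perturbation of $g$ in the sense of \cite{g1}, the rescaling-and-continuation machinery of \cite{gs,g1} (the limit equation being the one with coefficient exactly $|x|^{-b}$, which has a nondegenerate positive ground state by the variational theory that requires $b\in(0,1)$ when $N=1$, and which is subcritical because $p<5-2b$) produces, for $\lam>0$ small, a solution $u_0(\lam)$ of the rescaled equation, hence a solution $(\lam,u_0(\lam))$ of \eqref{stat}. The $C^1$ dependence $u_0\in C^1((0,\lam_0),H^1(\real))$ comes from the implicit function theorem applied in the rescaled variables; positivity $u_0(\lam)>0$ is inherited from the positivity of the ground state of the limit problem, preserved under the small perturbation. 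The statement about $C^2$ regularity and exponential decay of $u_0(\lam)$ and $u_0(\lam)'$ as $|x|\to\infty$ follows from elliptic (ODE) regularity and a standard comparison/sub-supersolution argument: since $\lam>0$ and $f(x,u_0(\lam)^2)\to 0$ as $|x|\to\infty$ by \eqref{finfas}, equation \eqref{stat} behaves like $u''=\lam u$ at infinity, forcing decay like $\e^{-\sqrt{\lam}|x|}$, and bootstrapping gives the same for $u_0(\lam)'$.

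The third step is to \emph{extract the norm asymptotics} \eqref{local.eq}. This is where the explicit scaling is used: if $v_k$ denotes the rescaled profile converging (in $H^1$) as $k\to0^+$ to the ground state $w$ of the limit equation, then $u_0(\lam)(x)= k^{\theta} v_k(x/k)$ for an appropriate $k=k(\lam)\to 0$ as $\lam\to 0$. Computing norms under this scaling, $\Ve u_0(\lam)\Ve_{L^2}^2 = k^{2\theta+1}\Ve v_k\Ve_{L^2}^2$, $\Ve u_0'(\lam)\Ve_{L^2}^2 = k^{2\theta-1}\Ve v_k'\Ve_{L^2}^2$, and $\Ve u_0(\lam)\Ve_{L^\infty}= k^{\theta}\Ve v_k\Ve_{L^\infty}$; since $\theta=(2-b)/(p-1)$ with $b\in(0,1)$ and $1<p<5-2b$, both exponents $2\theta+1>0$ and $2\theta-1 = (4-2b-(p-1))/(p-1) = (5-2b-p)/(p-1) > 0$ are positive, so all three norms tend to $0$ as $\lam\to 0$, using that $\Ve v_k\Ve_{H^1}$ and $\Ve v_k\Ve_{L^\infty}$ stay bounded (they converge to the corresponding norms of $w$). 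Note it is precisely the upper bound $p<5-2b$ that is needed to make $2\theta-1>0$, matching the discussion in the text.

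\textbf{Main obstacle.} The substantive work is entirely in the first step: showing that (A1)--(A3) really do imply the full list of hypotheses \cite[(r1)-(r5)]{g1} on the rest $r=\wt{f}-g$. This is delicate because the hypotheses in \cite{g1} are stated for a general perturbation and involve several weighted pointwise bounds together with a scaling-limit condition, and here we are re-deriving them from structural assumptions phrased directly on $\wt{f}$ rather than on $r$; in particular one must verify that the leading-order term $g$ cancels correctly in all the derivative estimates so that what remains is genuinely lower order near $s=0$. Everything downstream --- the bifurcation itself, the regularity and decay of $u_0(\lam)$, and the norm limits \eqref{local.eq} --- is then either a direct citation of \cite{g1,gs} or a routine ODE/scaling computation.
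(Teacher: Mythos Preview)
Your proposal is correct and matches the paper's own proof, which is essentially a one-line citation of \cite[Theorem~1~(a)]{g1} together with the remark that the condition $p<5-2b$ is what forces bifurcation from the trivial line; your three steps simply unpack what that citation entails, and your computation $2\theta-1=(5-2b-p)/(p-1)>0$ is exactly the mechanism behind that remark. The only minor adjustment is that for $N=1$ the model problem and its nondegenerate ground state are taken from \cite{g2} rather than \cite{gs} (which treats $N\ges3$), as the paper explicitly notes.
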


\begin{proof} This follows in the same way as Theorem~1~(a) of
\cite{g1}, using the bifurcation results for $N=1$ with the model 
nonlinearity $g$ in \cite{g2}, rather than those of \cite{gs} dealing with
$N\ges3$. Note that the condition $p<5-2b$ in (A2) ensures that
bifurcation occurs from the line of trivial solutions --- this is analogous
to the condition $p<1+\frac{4-2b}{N}$ in Theorem~1 of \cite{g1}.
\end{proof}

\begin{remark}\rm
Note that no symmetry or sign assumptions on the nonlinearity are required for
Theorem~\ref{local.thm}. In fact, the solutions inherit their positivity
from the sign properties of the limit nonlinearity \eqref{g}, and the local
analysis as $\lam\to0$.
\end{remark}

%%%%%%%%%%%%%%%%%%%%%%%%%%%%%%%%%%%%%%%%%%%%%%%%%%%%%%%%%%%%%%%%%%%%

\subsection{Global continuation}\label{global.sec}

We will now prove that, under appropriate assumptions, the local branch of
solutions of \eqref{stat} given by Theorem~\ref{local.thm} can be extended
to a global $C^1$ curve. In particular, we will now suppose that the problem is
symmetric with respect to $x=0$ ---
which will allow us to restrict the discussion
to the half-line, $x\in(0,\infty)$ --- 
and that the nonlinearity satisfies some
monotonicity conditions. 
Our precise hypotheses are the following:

\medskip
\begin{itemize}
\item[\bf(A4)] $f(-x,s)=f(x,s)$ for all $(x,s)\in\real\x\real_+$;
\item[\bf(A5)] $\partial_1 f(x,s)<0$ and $\partial_2 f(x,s)>0$
for all $x,s>0$;
\item[\bf(A6)] (i) $\p_1 f(\cdot,s)\in L^\infty(\real)$ for all $s\ges0$ 
and $\Ve\p_1 f(\cdot,s)\Ve_{L^\infty(\real)}$ is uniformly bounded for $s$ in 
compact subsets of $\real_+$;\\
(ii) $\partial_2 f(\cdot,s)\in L^\infty(\real)$ for all $s\ges0$ and 
$\{\partial_2 f(x,\cdot)\}_{x\in\real}$ is equicontinuous.
\item[\bf(A7)] (AL) holds with 
$f_\infty(0)>\disp\lim_{|x|\to\infty}f_\infty(x)$.
\end{itemize}

For brevity, we will refer from now on to the assumptions (A), (A4) to (A7) 
as assumption {\bf (A')}.

\begin{example}\label{ex3}\rm
In addition to the hypotheses made in Example~\ref{ex2}, we take
$\alpha\ges1$, and we suppose that $V$ is even, $V>0$ on $\real$, 
and $V'(x)<0$ for $x>0$. 
Then the function $f$ defined by \eqref{ex.eq} satisfies
assumption (A').
\end{example}

Let us now collect some important consequences of (A').

\begin{remark} \rm
\item[(a)] From (A0), (A5) and (A7), there exists $M>0$ such that
\begin{equation}\label{fbounded}
0\les f(x,s)\les f_\infty(x)\les M \quad\text{for all} \ (x,s)\in \real_+^2.
\end{equation}
\item[(b)] From (A0), (A4) and (A5), $f_\infty$ is even and 
non-increasing on $[0,\infty)$, with 
\begin{equation}\label{fio.eq}
\lim_{|x|\to\infty}f_\infty(x)=0.
\end{equation}
\item[(c)] (A6)(ii) implies that 
$\Ve\p_2 f(\cdot,s)\Ve_{L^\infty(\real)}$ is uniformly bounded and 
$\{\partial_2 f(x,\cdot)\}_{x\in\real}$ 
is uniformly equicontinuous on the compact subsets of $\real_+$
--- see e.g. \cite[Lemma~5.1]{rs}.
Furthermore, it follows by integration that $\{f(x,\cdot)\}_{x\in\real}$ is 
also uniformly equicontinuous on the compact subsets of $\real_+$. 
\item[(d)] The asymptotic linearization \eqref{lin} 
has a principal eigenvalue. Indeed, setting
$$
-\ly := \inf_{u\in H^1(\real)\sm\{0\}} 
\frac{\intr (u')^2-f_\infty(x)u^2\diff x}{\intr u^2\diff x},
$$
(A7) and \eqref{fio.eq} imply $\ly\in(0,\infty)$. Furthermore,
it follows from the spectral theory of Schr\"odinger operators 
(see e.g. \cite{stuart98}) that $\ly$
is the supremum of the spectrum of \eqref{lin}. Since
$\lim_{|x|\to\infty}f_\infty(x)=0$, we have
$\sigma_\mathrm{ess}=(-\infty,0]$,
where $\sigma_\mathrm{ess}$ denotes the essential spectrum of \eqref{lin}. 
Hence, $\lambda_\infty>0$ is the principal eigenvalue of \eqref{lin}.
\end{remark}

In order to discuss global continuation, 
it is convenient to introduce the function 
$F:\real\x H^1(\real)\to H^{-1}(\real)$ defined by
\begin{equation}\label{F.def}
F(\lam,u)(x):=u''(x)+f(x,u(x)^2)u(x)-\lambda u(x),
\end{equation} 
where $H^{-1}(\real)$ denotes the topological dual of $H^1(\real)$, and the 
right-hand side of \eqref{F.eq} is interpreted as an element of 
$H^{-1}(\real)$ via the canonical identifications:
\begin{equation}\label{ident1}
\la \ffi, v \ra_{H^{-1}\x H^1} \equiv \intr \ffi\, v \diff x
\quad\text{for all} \ \ffi\in L^2(\real), \ v \in H^1(\real);
\end{equation}
\begin{equation}\label{ident2}
\la u'', v \ra_{H^{-1}\x H^1} \equiv -\intr u' v' \diff x
\quad\text{for all} \ u,v \in H^1(\real).
\end{equation}
It follows easily from assumption (A') that 
$F\in C^1(\real\x H^1(\real),H^{-1}(\real))$.
We will obtain positive even solutions of \eqref{stat} by solving the
problem
\begin{equation}\label{F.eq}
F(\lam,u)=0, \quad (\lam,u) \in (0,\infty)\x H^1(\real), \ 
u>0 \ \text{on} \ \real.
\end{equation}
The following result shows that
all solutions of \eqref{F.eq} inherit symmetry and
monotonicity from the nonlinearity.

\begin{lemma}\label{sym.lem}
Let $f$ satisfy (A1), (A4) and (A5), and 
$(\lam,u)\in\real\x H^1(\real)$ be a solution of \eqref{F.eq}.
Then $u(-x)=u(x)$ for all $x\ges0$, $u\in C^3(\real)$ with $u'(0)=0$
and $u'(x)<0$ for all $x>0$, and $(\lam,u)$ is a classical solution of 
\eqref{stat}.
\end{lemma}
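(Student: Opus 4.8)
The plan is to first upgrade regularity, then establish symmetry by a uniqueness-type argument, and finally extract monotonicity from the sign of $\partial_1 f$. First I would observe that since $u\in H^1(\real)\hookrightarrow C(\real)\cap L^\infty(\real)$ and $f\in C^1(\real\times\real_+)$, the function $x\mapsto f(x,u(x)^2)u(x)$ is continuous and bounded; hence from \eqref{stat}, written as $u''=\lam u-f(x,u^2)u$, we get $u''\in C(\real)$, so $u\in C^2(\real)$. Differentiating the equation once more and using (A1) (so that $\partial_1 f,\partial_2 f$ are continuous) together with $u\in C^2$ gives $u'''\in C(\real)$, i.e. $u\in C^3(\real)$; in particular $(\lam,u)$ solves \eqref{stat} classically. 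Since $u\in H^1(\real)$, we have $u(x)\to0$ as $|x|\to\infty$.

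For the symmetry, set $v(x):=u(-x)$. By (A4), $f(-x,v^2)=f(x,v^2)$, so $v$ solves the same ODE \eqref{stat} on $\real$ and also lies in $H^1(\real)$ with $v>0$. The idea is to show $u\equiv v$ by comparing them on a half-line. A clean way is a moving-plane / reflection argument: consider $w:=u-v$, which satisfies a linear equation $w''=\lam w-c(x)w$ where $c(x):=\big(f(x,u^2)u-f(x,v^2)v\big)/(u-v)$ when $u\neq v$ (and $c$ extends continuously using the $C^1$ structure of $\tilde f$), with $c$ bounded. Since $u,v\to0$ at $\pm\infty$, $w\to0$ at $\pm\infty$; and $w(0)$ need not vanish a priori, so a bare maximum principle is not enough. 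Instead I would use the standard fact that a positive $H^1$ solution of $u''=\lam u-f(x,u^2)u$ with $\lam>0$ and $f\ges0$ must, by the strong maximum principle applied to $u''=(\lam-f(x,u^2))u$, have a unique maximum point; combined with (A5) (monotone decrease of $f$ in $x$ on $x>0$, and evenness of the equation) this forces the maximum to be at $x=0$. Concretely: at an interior maximum $x_0$ of $u$ one has $u'(x_0)=0$, $u''(x_0)\les0$, hence $f(x_0,u(x_0)^2)\ges\lam>0$; if $x_0>0$ then by (A5), $\partial_1 f<0$ on $x>0$, and a Hopf-type/ODE uniqueness argument at $x_0$ shows $u$ cannot have had a larger or equal value for $x<x_0$, a contradiction — more robustly, one shows $u$ is strictly decreasing past its maximum so the maximum is unique, and then the reflection $v$ has its maximum at $-x_0$; since $u$ and $v$ solve the same equation with the same value and derivative at no common point unless $x_0=0$, uniqueness for the initial-value problem (Picard, using (A1)) forces $x_0=0$ and $u\equiv v$. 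Hence $u(-x)=u(x)$ and $u'(0)=0$.

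Finally, for $u'(x)<0$ when $x>0$: with $u'(0)=0$ and $u$ even, differentiate \eqref{stat} to get $(u')''=\big(\lam-f(x,u^2)-2u^2\partial_2 f(x,u^2)\big)u'-\partial_1 f(x,u^2)\,u$. On $x>0$ we have $u>0$ and, by (A5), $-\partial_1 f(x,u^2)>0$, so the last term is strictly positive; thus $h:=u'$ satisfies $h''=a(x)h+q(x)$ on $(0,\infty)$ with $q>0$, $h(0)=0$. I would argue as follows: since $u$ is even, $C^3$, not identically constant, and tends to $0$ at infinity with a positive maximum at $0$, there is an interval $(0,\eps)$ on which $u'<0$ (the maximum at $0$ is strict by the uniqueness just proved). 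If $u'$ vanished again at some first $x_1>0$, then $u'<0$ on $(0,x_1)$ and $u'(x_1)=0$ with $u''(x_1)\ges0$; but evaluating the derived equation at $x_1$ gives $u''(x_1)=u'''(x_1)\cdot$(nothing)$\,$— more precisely $h''(x_1)=q(x_1)>0$, and one checks this is incompatible with $h$ having a local minimum equal to $0$ from below while staying $\les0$ just before and needing to rise: a short convexity/sign argument on $[x_1,\infty)$ shows $u'$ would have to become positive and stay bounded away from $0$, contradicting $u(x)\to0$. Thus $u'<0$ on all of $(0,\infty)$.

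\medskip
\noindent\textbf{Main obstacle.} The delicate point is the symmetry step: proving that the (a priori arbitrary) positive $H^1$ solution is automatically even. The maximum-point-uniqueness argument needs care because $f$ vanishes as $s\to0$ (so $\lam-f(x,u^2)>0$ near infinity, which is actually helpful) but the comparison must be run so that it is genuinely forced by (A4)–(A5) and the boundary behaviour at $\pm\infty$, rather than by an assumed boundary condition at $0$. I expect this to require either a clean moving-plane argument adapted to the whole line, or invoking an ODE phase-plane analysis for $u''=(\lam-f(x,u^2))u$ together with the monotonicity of $f(\cdot,s)$; the monotonicity conclusion $u'<0$ then follows comparatively routinely from the derived linear equation and the sign of $\partial_1 f$.
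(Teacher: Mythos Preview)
The paper's own proof is a one-liner: it bootstraps regularity (as you do) and then defers the symmetry and monotonicity to ``standard arguments'' in \cite[Lemma~2]{js}. Your regularity step is correct, and your plan for $u'<0$ on $(0,\infty)$ \emph{given} evenness is along the right lines.

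The symmetry step, however, has a genuine gap. The sentence ``since $u$ and $v$ solve the same equation with the same value and derivative at no common point unless $x_0=0$, uniqueness for the initial-value problem (Picard) forces $x_0=0$ and $u\equiv v$'' is a non-sequitur. Picard uniqueness says that \emph{if} two solutions share Cauchy data at a common point, they coincide; it says nothing in the absence of such a point. Knowing that $u$ has its maximum at $x_0$ and $v=u(-\cdot)$ at $-x_0$ only gives matching data at \emph{different} points of a \emph{non-autonomous} equation, and nothing you wrote excludes two distinct positive $H^1$ solutions with maxima at $\pm x_0$. You cannot borrow uniqueness of positive solutions either --- that is established later in the paper via Toland's method and itself relies on Lemma~\ref{sym.lem}.

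You correctly identify this as the main obstacle and name the right tools (moving planes, or an ODE phase-plane/energy argument), but you do not actually execute either; the argument you do write down does not work. A route that does work: slide the reflection point in from $+\infty$ and use (A5) (monotonicity of $f(\cdot,s)$ on $(0,\infty)$) together with the maximum principle to keep $u(2\mu-x)\les u(x)$ for $x>\mu$, until the plane reaches $\mu=0$; this is the 1D Gidas--Ni--Nirenberg argument. Alternatively, in the spirit of \cite{js}, one first proves that $u$ has exactly one critical point by exploiting the Lyapunov function $H(x)=\tfrac12(u')^2-\tfrac{\lambda}{2}u^2+\tfrac12\int_0^{u^2}f(x,s)\,ds$, whose derivative $H'(x)=\tfrac12\int_0^{u^2}\partial_1 f(x,s)\,ds$ is strictly negative on $(0,\infty)$ and strictly positive on $(-\infty,0)$ by (A5), combined with a Sturm-type comparison. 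Only \emph{after} the unique critical point is pinned at $x=0$ does Picard (applied at $x=0$, where $u(0)=v(0)$ and $u'(0)=v'(0)=0$) legitimately yield $u\equiv u(-\cdot)$.
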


\begin{proof}
First, it is easily seen that weak solutions of \eqref{stat} are 
in fact classical solutions, and it follows from (A1) that they are $C^3$.
The remaining statements then 
follow by standard arguments --- see e.g. the proof of \cite[Lemma~2]{js}.
\end{proof}

The following lemma establishes further properties of the solutions,
in particular their exponential decay.
We will suppose that (A') holds throughout the rest of this section.

\begin{lemma}\label{basic.lem}
Let $(\lam,u)\in\real\x H^1(\real)$ be a solution of \eqref{F.eq}.
\item[(i)] $0<\lam<\lam_\infty$.
\item[(ii)] For any $\ep\in(0,\lam)$, let $\eta=\lam-\ep$. Then
there exists $r_\ep>0$ such that
\begin{equation}\label{expu.eq}
|u(x)|\les \Ve u\Ve_{L^\infty(\real)}\e^{-\sqrt{\eta}(|x|-r_\ep)},
\quad\text{for all} \ x \in\real.
\end{equation}
Furthermore, 
\begin{equation}\label{expu'.eq}
\lim_{x\to\pm\infty}\frac{u'(x)}{u(x)}=\pm\sqrt{\lam}.
\end{equation}
\end{lemma}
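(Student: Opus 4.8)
\textbf{Proof plan for Lemma~\ref{basic.lem}.}

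The plan is to treat the two parts separately, using the classical regularity and sign information already granted by Lemma~\ref{sym.lem} (so $u\in C^3(\real)$, $u>0$, even, decreasing on $(0,\infty)$) together with the fact that $u\in H^1(\real)$ forces $u(x),u'(x)\to0$ as $|x|\to\infty$.

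For part (i), positivity of $\lam$ follows by testing \eqref{stat} against $u$ itself: integrating by parts gives $\lam\Ve u\Ve_{L^2}^2=\intr f(x,u^2)u^2\diff x-\intr(u')^2\diff x$, which is not immediately of a definite sign, so instead I would argue via the maximum principle. Since $u$ attains its positive maximum at $x=0$ with $u''(0)\les0$, the equation at $x=0$ gives $\lam u(0)=u''(0)+f(0,u(0)^2)u(0)\les f(0,u(0)^2)u(0)$, hence $\lam\les f(0,u(0)^2)$, which only gives an upper bound. The cleaner route for $\lam>0$: from \eqref{expu'.eq} (to be proved first, or independently) the solution decays, but more directly, rewrite the equation as $u''=(\lam-f(x,u^2))u$. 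If $\lam\les0$ then, because $0\les f(x,u^2)$ with $f\to0$ as $|x|\to\infty$ and $u(x)\to0$, for large $|x|$ we would have $u''=(\lam-f)u$ with $\lam-f<0$ eventually only if $\lam<0$ strictly; when $\lam=0$ one gets $u''=-f(x,u^2)u\les0$, so $u$ is concave on all of $\real$, contradicting $u>0$ together with $u\to0$ at both ends. When $\lam<0$, on the region where $|x|$ is large, $\lam-f(x,u^2)<0$, so $u''<0$ there, and again concavity at infinity is incompatible with $u>0,u\to0$; one pushes this to a contradiction by noting $u'$ would have to be eventually increasing toward $0^-$ on $(0,\infty)$ but concavity forces $u'$ decreasing — a standard ODE comparison kills it. For the upper bound $\lam<\ly$: the positive solution $u$ is a positive eigenfunction-like object, so a Sturm-type comparison with the asymptotic linearization \eqref{lin} applies. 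Precisely, since $f(x,u^2)\les f_\infty(x)$ by \eqref{fbounded}, $u$ is a positive subsolution (in the appropriate sense) of $v''+f_\infty(x)v=\lam v$; multiplying \eqref{stat} by the principal eigenfunction $\ffy>0$ of \eqref{lin}, integrating by parts twice, and using $u,\ffy>0$ yields $(\lam-\ly)\intr u\ffy\diff x=\intr(f(x,u^2)-f_\infty(x))u\ffy\diff x\les0$, and since $f(x,u^2)<f_\infty(x)$ on a set of positive measure (because $u$ is bounded and $f(x,\cdot)$ is strictly increasing by (A5)), the inequality is strict, giving $\lam<\ly$.

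For part (ii), the exponential bound \eqref{expu.eq} is a comparison-principle argument. Fix $\ep\in(0,\lam)$, set $\eta=\lam-\ep>0$. Since $f(x,u(x)^2)\to0$ as $|x|\to\infty$ (by \eqref{finfas}, using that $u$ is bounded, or by (A0)), there is $r_\ep>0$ with $f(x,u(x)^2)\les\ep$ for $|x|\ges r_\ep$. On $\{|x|\ges r_\ep\}$ the equation gives $u''=(\lam-f(x,u^2))u\ges\eta u>0$. Compare $u$ with the barrier $w(x):=\Ve u\Ve_{L^\infty}\e^{-\sqrt\eta(|x|-r_\ep)}$, which satisfies $w''=\eta w$ on $x>r_\ep$ and $x<-r_\ep$; at $x=r_\ep$ we have $w(r_\ep)=\Ve u\Ve_{L^\infty}\ges u(r_\ep)$, and $u-w\to0$ at $+\infty$, so the function $v:=u-w$ satisfies $v''\ges\eta v$ on $(r_\ep,\infty)$, $v(r_\ep)\les0$, $v\to0$; the maximum principle for $v''-\eta v\ges0$ on a half-line with these boundary data forces $v\les0$, i.e.\ $u\les w$ there, and symmetrically for $x<-r_\ep$; for $|x|\les r_\ep$ the bound is trivial since the exponent is nonnegative and $u\les\Ve u\Ve_{L^\infty}$. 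Finally \eqref{expu'.eq}: set $\rho(x):=u'(x)/u(x)$ for $x>0$ (legitimate since $u>0$); then $\rho'=u''/u-\rho^2=(\lam-f(x,u^2))-\rho^2$. As $x\to+\infty$, $f(x,u^2)\to0$, so $\rho$ satisfies $\rho'=\lam-\rho^2+o(1)$; the Riccati equation $\rho'=\lam-\rho^2$ has the stable equilibrium $-\sqrt\lam$ and unstable equilibrium $+\sqrt\lam$. Since $u>0$, $u'<0$ on $(0,\infty)$ (Lemma~\ref{sym.lem}), and $u\to0$, $u'\to0$, one shows $\rho$ is bounded and cannot converge to anything other than $-\sqrt\lam$: if $\rho$ stayed near $+\sqrt\lam$ then $u$ would grow, contradicting $u\to0$; a standard argument with the perturbed Riccati flow (e.g.\ bounding $\limsup$ and $\liminf$ of $\rho$ and using $\rho'>0$ whenever $\rho^2<\lam-\delta$) pins $\rho(x)\to-\sqrt\lam$, and by evenness $\rho(x)\to+\sqrt\lam$ as $x\to-\infty$.

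The main obstacle I expect is the rigorous handling of the Riccati asymptotics for \eqref{expu'.eq} — converting the heuristic "$\rho$ is attracted to $-\sqrt\lam$" into a clean proof that uses only the qualitative facts $u>0$, $u'<0$, $u,u'\to0$, and $f(x,u(x)^2)\to0$, without circular use of the exponential bound. A safe way to organize it: first prove \eqref{expu.eq} (which does not need \eqref{expu'.eq}), then use \eqref{expu.eq} together with an analogous lower exponential bound (obtained by comparison from below with $\e^{-\sqrt{\lam+\ep}\,|x|}$-type barriers) to squeeze $\rho$ between $-\sqrt{\lam+\ep}$ and $-\sqrt{\lam-\ep}$ for every $\ep$, yielding the limit. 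The part~(i) bounds are more routine, modulo choosing the maximum-principle/Sturm-comparison packaging carefully so that strict inequality $\lam<\ly$ genuinely comes out.
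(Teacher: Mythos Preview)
Your plan is largely sound and, for part (i) and the exponential bound \eqref{expu.eq}, essentially coincides with the paper's argument. Two small remarks there: for $\lam>0$ you can streamline by noting that if $\lam\les0$ then $u''=(\lam-f(x,u^2))u\les0$ everywhere (since $f\ges0$ by \eqref{fbounded} and $u>0$), so $u$ is concave on $\real$, which is impossible for a nontrivial positive $H^1$ function; no need to split into $\lam=0$ and $\lam<0$. For $\lam<\ly$ your eigenfunction pairing with $\ffy$ is exactly the idea the paper invokes (via a reference).

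The real divergence is in \eqref{expu'.eq}, which you correctly flag as your main obstacle. The paper bypasses the Riccati analysis entirely with a one-line l'Hospital argument: since $u,u'\to0$ and $uu'\neq0$ on $(0,\infty)$,
\[
\lim_{x\to\infty}\frac{u'(x)^2}{u(x)^2}
=\lim_{x\to\infty}\frac{2u'(x)u''(x)}{2u(x)u'(x)}
=\lim_{x\to\infty}\frac{u''(x)}{u(x)}
=\lim_{x\to\infty}\big(\lam-f(x,u(x)^2)\big)=\lam,
\]
and then the sign of $u'/u$ finishes it. This is considerably shorter than either your Riccati attractor argument or the upper/lower exponential squeeze. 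Note also that your proposed ``safe way'' has a gap as stated: two-sided exponential bounds on $u$ do \emph{not} by themselves control $u'/u$ (e.g.\ an oscillatory prefactor can keep $u$ between two exponentials while $u'/u$ fails to converge), so you would still need to feed the equation back in --- at which point you are essentially redoing the Riccati analysis. The l'Hospital trick avoids all of this.
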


\begin{proof} To the principal eigenvalue $\lam_\infty$ of \eqref{lin} 
corresponds an eigenfunction $\ffi_\infty>0$. The proof of (i) then follows
in a similar way to that of \cite[Proposition~14~(iv)]{g3}.

Property (ii) was stated in \cite[Proposition~14~(iii)]{g3} in the context
of \eqref{stat} on the half-line, but we did not give the proof explicitly 
there, so we present it here for completeness.
By (A0), for any $\ep\in(0,\lam)$, 
there exists $r_\ep>0$ such that
$$
|x|\ges r_\ep \implies f(x,s)\les \ep <\lam \quad\forall\,s\ges0.
$$
Define a function 
$$
z(x):=\Ve u\Ve_{L^\infty}\e^{-\sqrt{\eta}(|x|-r_\ep)}, \quad x\in\real,
$$
and a set $\Omega_\ep:=\{x\in\real:|x|\ges r_\ep, \ z(x)<0\}$. For all
$x\in\Omega_\ep$ we have
$$
u''(x)=[\lam-f(x,u(x)^2)]u(x)\ges[\lam-\ep]u(x)=\eta u(x).
$$
Hence,
\begin{align*}
z''(x)	&=\eta\Ve u\Ve_{L^\infty}\e^{-\sqrt{\eta}(|x|-r_\ep)}-u''(x)\\
		&\les\eta\big(
		\Ve u\Ve_{L^\infty}\e^{-\sqrt{\eta}(|x|-r_\ep)}-u(x)\big)
		=\eta z(x) \quad\forall\,x\in\Omega_\ep.
\end{align*}
Furthermore, $z(x)=\Ve u\Ve_{L^\infty}-u(x)\ges0$ for $|x|=r_\ep$ and
$\lim_{|x|\to\infty}z(x)=0$. Therefore, if $\Omega_\ep\neq\emptyset$, it
follows by the weak maximum principle \cite[Theorem~8.1]{gt} that $z\ges0$
in $\Omega_\ep$, a contradiction. Hence $\Omega_\ep=\emptyset$ and so
$$
u(x)\les \Ve u\Ve_{L^\infty}\e^{-\sqrt{\eta}(|x|-r_\ep)}, 
\quad |x|\ges r_\ep.
$$
A similar argument applied to $-u$ yields
$$
-u(x)\les \Ve u\Ve_{L^\infty}\e^{-\sqrt{\eta}(|x|-r_\ep)}, 
\quad |x|\ges r_\ep.
$$
Since we clearly have 
$|u(x)|\les \Ve u\Ve_{L^\infty}\e^{-\sqrt{\eta}(|x|-r_\ep)}$ 
for $|x|\les r_\ep$, \eqref{expu.eq} is proved.

Finally, by de l'Hospital's rule,
$$
\lim_{|x|\to\infty}\frac{u'(x)^2}{u(x)^2} =
\lim_{|x|\to\infty}\frac{2u'(x)u''(x)}{2u(x)u'(x)} =
\lim_{|x|\to\infty}\frac{u''(x)}{u(x)} = 
\lim_{|x|\to\infty}\lam-f(x,u(x)^2)=\lam,
$$
where we have used (A0) and (A6)(ii) in the last equality.
Since we know from Lemma~\ref{sym.lem} that $u$ is even with 
$u'<0$ on $(0,\infty)$, \eqref{expu'.eq} follows.
\end{proof}

Let us now prove that the solutions of
\eqref{F.eq} are non-degenerate. This will allow us to extend the local
curve of solutions obtained in Theorem~\ref{local.thm} in a smooth manner.
We will denote by $D_2F$ the Fr\'echet
derivative of $F$ with respect to its second argument.

\begin{lemma}\label{nondegen.lem} 
Let $(\lam,u)\in\real\x H^1(\real)$ be a solution of 
\eqref{F.eq}. Then the linear mapping 
$D_2F(\lam,u):H^1(\real)\to H^{-1}(\real)$ is an isomorphism.
\end{lemma}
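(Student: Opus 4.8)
The plan is to show that $L := D_2F(\lam,u)$ is a bijection from $H^1(\real)$ onto $H^{-1}(\real)$; boundedness of $L$ is immediate from $F\in C^1$, and boundedness of the inverse then follows from the open mapping theorem, so the whole task reduces to proving that $L$ is injective and surjective. A direct computation gives
\begin{equation}\label{L.def}
L v = v'' + \big[f(x,u^2) + 2u^2\,\partial_2 f(x,u^2)\big] v - \lam v,
\end{equation}
so $L$ is (the distributional realization of) a Schr\"odinger operator $-v'' + W(x)v$ with potential $W(x) = \lam - f(x,u^2) - 2u^2\,\partial_2 f(x,u^2)$. By Lemma~\ref{basic.lem}(ii), $u$ and hence $W(x)-\lam \to 0$ exponentially as $|x|\to\infty$, and $W$ is continuous and bounded by assumption (A'); standard one-dimensional Schr\"odinger operator theory then tells us that $L:H^1\to H^{-1}$ (equivalently, the self-adjoint operator on $L^2$ with domain $H^2$) is Fredholm of index $0$ as soon as $\lam>0$ keeps us in the resolvent set of the essential spectrum, which here is $(-\infty,-\lam]\subset(-\infty,0)$ since $0<\lam<\ly$ by Lemma~\ref{basic.lem}(i). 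Hence surjectivity will follow once injectivity is established, and the real content is to prove that $L v = 0$, $v\in H^1(\real)$, forces $v\equiv 0$.

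For the injectivity I would use the classical Sturm–oscillation / positivity argument for the linearization at a positive solution. The key observation is that $u>0$ itself is \emph{not} in the kernel — we instead have $Lu = 2u^3\,\partial_2 f(x,u^2) > 0$ on $\real$ by (A5) — so $u$ is a positive strict supersolution of the equation $Lv=0$; equivalently, the lowest eigenvalue $\mu_1$ of $L$ on $L^2(\real)$ is strictly positive. Concretely: suppose $v\in H^1$ with $Lv=0$ and $v\not\equiv0$. Decay estimates as in Lemma~\ref{basic.lem}(ii) (applied to $L$, using $W\to\lam>0$) show $v$ and $v'$ decay exponentially, so all the integrations by parts below are legitimate. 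Multiply $Lv=0$ by $u$ and $Lu = 2u^3\partial_2 f(x,u^2)$ by $v$, subtract, and integrate over $\real$; the boundary and $W$-terms cancel by symmetry of the bilinear form (Wronskian-type identity), leaving
\begin{equation}\label{wronskian}
\intr 2u(x)^3\,\partial_2 f(x,u(x)^2)\, v(x)\diff x = 0.
\end{equation}
Since $u>0$ and $\partial_2 f(x,u^2)>0$ for all $x$ (using (A5) for $x\neq0$, and continuity plus $\partial_2 f(0,s)\geq 0$ — in fact (A5) extends to $x=0$ by continuity of $\partial_2 f$ only as $\geq$, but $\partial_2 f(0,s)>0$ can be obtained from (A3)/(A5), so the weight is strictly positive a.e.), \eqref{wronskian} forces $v$ to change sign. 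Thus any nontrivial kernel element is sign-changing, hence orthogonal in $L^2$ to the ground state; but a nontrivial kernel element is an eigenfunction at eigenvalue $0$, and since $\mu_1>0$ (shown next) there is no eigenvalue at $0$ below which a sign-changing eigenfunction could sit — contradiction.

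The one genuinely delicate point, and what I expect to be the main obstacle, is establishing $\mu_1 > 0$ for $L$, i.e. coercivity of the quadratic form $Q(v) = \intr (v')^2 - [f(x,u^2)+2u^2\partial_2 f(x,u^2)]v^2 + \lam v^2\diff x$. The clean way is the substitution $v = u\,w$: since $u$ solves $u'' + f(x,u^2)u = \lam u$, one computes
\begin{equation}\label{ground.state.sub}
Q(u\,w) = \intr u(x)^2\, w'(x)^2 \diff x + \intr 2u(x)^4\,\partial_2 f(x,u(x)^2)\, w(x)^2 \diff x \ges 0,
\end{equation}
with equality only if $w$ is constant \emph{and} the second integral vanishes, which is impossible for $w\neq0$ since the integrand is strictly positive where $u w\neq0$. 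This shows $Q(v)>0$ for all $v = uw\in H^1\setminus\{0\}$; a density/approximation argument (every $v\in H^1$ with exponential weight is of the form $uw$ with $w\in H^1_{\mathrm{loc}}$, and one truncates $w$) upgrades this to $Q>0$ on all of $H^1\setminus\{0\}$, hence $\mu_1 = \inf Q(v)/\|v\|_{L^2}^2 \ges 0$, and in fact $>0$ because the bottom of the essential spectrum of $L$ is $\lam>0$ and the infimum, if attained, would be attained at a positive eigenfunction — contradicting that the only positive candidate $u$ gives $Q(u) = \intr 2u^4\partial_2 f\, \diff x > 0$. With $\mu_1>0$ secured, $0\notin\sigma(L)$, so $L$ is injective, and by the index-zero Fredholm property it is an isomorphism. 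The substitution \eqref{ground.state.sub} is exactly the mechanism that turns the sign condition $\partial_2 f>0$ in (A5) into nondegeneracy, so the proof will hinge on making that identity and its density extension precise.
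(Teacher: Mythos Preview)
Your reduction to injectivity via the compact perturbation argument is fine, and the Wronskian identity \eqref{wronskian} is correct. But the core of your proposal --- the claim that $\mu_1>0$ for the linearized operator --- is false, and the mistake is a sign error in the ground-state substitution. Redoing the computation: with $v=uw$ and $u''+f(x,u^2)u=\lam u$, integrating $\intr (u')^2 w^2\diff x$ by parts against $u$ gives
\[
Q(uw)=\intr u^2 (w')^2\diff x \;-\; 2\intr u^4\,\p_2 f(x,u^2)\,w^2\diff x,
\]
not the ``$+$'' you wrote. In particular $Q(u)=-2\intr u^4\p_2 f\diff x<0$ by (A5), so $\mu_1<0$. (This is exactly consistent with what the paper later establishes in Proposition~\ref{spectral.prop}: the Morse index of $L_\lam^1$ is $1$.) So the operator has one negative eigenvalue and essential spectrum $[\lam,\infty)$, and the question is precisely whether $0$ is an eigenvalue in the gap; your positivity argument cannot decide that.

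A second red flag: your argument never uses the hypothesis $\p_1 f(x,s)<0$ from (A5), and that condition is in fact what drives nondegeneracy here. The paper's proof is a Sturm-type oscillation argument built around $z:=u'$, which solves
\[
z''+[2\p_2 f(x,u^2)u^2+f(x,u^2)]z+\p_1 f(x,u^2)u=\lam z,
\]
i.e.\ the linearized equation with an extra inhomogeneous term of definite sign. One first shows any kernel element $v$ must be even (comparing its odd part with $z$ via the Lagrange identity and using $\p_1 f<0$), then uses \eqref{wronskian} to force a zero of $v$ in $(0,\infty)$, then compares $v$ with $z$ on the tail $(x_1,\infty)$ beyond the last zero to reach a contradiction, again from the sign of $\p_1 f$. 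The monotonicity of $f$ in $x$ is not incidental; without it the linearization can genuinely have kernel (translation invariance in the $x$-independent case being the obvious source).
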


\begin{proof} The linear operator 
$D_uF(\lam,u):H^1(\real)\to H^{-1}(\real)$
is explicitly given by 
$$
D_2F(\lam,u)v=v''+[2\p_2f(x,u^2)u^2+f(x,u^2)]v-\lam v.
$$
Note that we can write it as $D_2F(\lam,u)=R_\lam+C$, where 
$R_\lam v:=v''-\lam v$ and $Cv:=[2\p_2f(x,u^2)u^2+f(x,u^2)]v$, 
$v\in H^1(\real)$. Since $u\in H^1(\real)$, it follows from (A0) and (A6)(ii)
that 
\begin{equation}\label{lim.eq}
\lim_{|x|\to\infty}2\p_2f(x,u(x)^2)u(x)^2+f(x,u(x)^2)=0.
\end{equation}
It is then easily seen that $C:H^1(\real)\to H^{-1}(\real)$ is a compact
linear operator. Consequently, since $R_\lam v:H^1(\real)\to H^{-1}(\real)$ 
is an isomorphism for all $\lam>0$, we need only show that $D_uF(\lam,u)$ is
injective. 

To prove this by contradiction, let us suppose that there
exists $v\in H^1(\real)\sm\{0\}$ such that
\begin{equation}\label{v.eq}
v''+[2\p_2f(x,u^2)u^2+f(x,u^2)]v=\lam v \quad\text{in} \ H^{-1}(\real).
\end{equation}
Clearly, it follows that $v\in C^2(\real)\cap H^2(\real)$ 
and that the equation holds in the classical sense. 
We will first prove that $v$ is even. Let
$w\in H^{1}(\real)$ be the odd part of $v$, 
$w(x):=\frac12(v(x)-v(-x)), \ x\in\real$. We need to show that $w\equiv0$.
Suppose instead that $w\not\equiv0$. We have that $w\in C^2(\real)$,
$w(0)=0$ and $w$ is also a solution of the linear equation \eqref{v.eq}. 
Without loss of generality, we can then suppose that $w'(0)>0$. Hence, $w>0$
in a right neighbourhood of $x=0$. Let $x_0>0$ be the first positive zero of
$w$, or $x_0=\infty$ if $w>0$ on $(0,\infty)$. In case $x_0<\infty$ we have
$w(x_0)=0$ and $w'(x_0)<0$.

We now let $z:=u'$. By Lemma~\ref{sym.lem}, $z<0$, $z\in C^2(\real)$, and $z$ 
satisfies
\begin{equation}\label{z.eq}
z''+[2\p_2f(x,u^2)u^2+f(x,u^2)]z+\p_1 f(x,u^2)u =\lam z.
\end{equation}
In case
$x_0<\infty$, integrating the Lagrange
identity for $w$ and $z$ between $x=0$ and $x=x_0$, and using $w(0)=0$, yields
$$
z(x_0)w'(x_0)=\int_0^{x_0}\p_1 f(x,u^2)wu \diff x.
$$
If $x_0=\infty$, we get
$$
0=\lim_{x\to\infty}z(x)w'(x)-w(x)z'(x)=\int_0^\infty\p_1 f(x,u^2)wu \diff x.
$$
It follows from the previous discussion that 
$z(x_0)w'(x_0)>0$. However, in both cases the integral on the right-hand side 
is $<0$ by (A5). This contradiction shows that we must have $w\equiv 0$
indeed. Hence, $v$ is even and $v'(0)=0$.

Consequently, integrating the Lagrange identity for $u$ and $v$ over 
$(0,\infty)$ yields
$$
\intoi\p_2f(x,u^2)u^3v\diff x=0.
$$
Since $\p_2f(x,u(x)^2)u(x)^3>0$ for all $x>0$ by (A5), it follows that
$v$ must have at least one zero in $(0,\infty)$. Furthermore, for any $x>0$,
multiplying \eqref{v.eq} by $v$ and integrating over $(x,\infty)$ yields
$$
\int_x^\infty vv''\diff y + 
\int_x^\infty [2\p_2f(y,u^2)u^2+f(y,u^2)]v^2\diff y =
\lam\int_x^\infty v^2\diff y.
$$
Hence, integrating by parts,
$$
v(x)v'(x)=-\int_x^\infty v'(y)^2\diff y +
\int_x^\infty [2\p_2f(y,u^2)u^2+f(y,u^2)-\lam]v^2\diff y, 
\quad\text{for all} \ x>0. 
$$
But it follows by \eqref{lim.eq} that there exists $r>0$ such that
$$
2\p_2f(y,u(y)^2)u^2+f(y,u(y)^2)-\lam \les -\lam/2
\quad\text{for all} \ y\ges r,
$$
and so $v'(x)v(x)<0$ for all $x\ges r$. In particular, there exists
$x_1>0$ such that $v(x_1)=0$ and $v(x)\neq0$ for all $x>x_1$. Without loss
of generality, we can suppose that $v'(x_1)>0$ and $v(x)>0$ for all $x>x_1$.
Integrating the Lagrange identity for $v$ and $z$ over $(x_1,\infty)$ yields
$$
zv'-vz'\Big\ve_{x_1}^\infty=\int_{x_1}^\infty \p_1 f(x,u^2)vu \diff x.
$$
In view of (A6), it follows easily from \eqref{z.eq} that 
$z\in H^2(\real)$, and so
$$
\lim_{x\to\infty}z(x)v'(x)-v(x)z'(x)=0. 
$$
Therefore,
$-z(x_1)v'(x_1)<0$ by (A5), so that $v'(x_1)<0$. 
This contradiction finishes the proof.
\end{proof}

We are now in a position to prove the main result of this section.

\begin{theorem}\label{global.thm}
Let assumption (A') hold. There exists
$u\in C^1((0,\lam_\infty),H^1(\real))$ such that, for all 
$\lam\in(0,\lam_\infty)$, $(\lam,u(\lam))$ is the unique
positive solution of \eqref{stat},
$u(\lam)\in C^2(\real)\cap H^2(\real)$, $u(\lam)$ is even, and
satisfies Lemmas~\ref{sym.lem}~and~\ref{basic.lem}.

Furthermore, there is bifurcation from the line of trivial solutions at
$\lam=0$, in the sense of \eqref{local.eq}, and
asymptotic bifurcation at $\lam=\lam_\infty$, in the following sense:
if $\lam_n\to\lam\in(0,\lam_\infty]$ as $n\to\infty$ then
\begin{equation}\label{asympt.eq}
\lim_{n\to\infty}\Vert u(\lam_n)\Vert_{H^2(\real)}=
\lim_{n\to\infty} \Ve u(\lam_n) \Ve_{L^\infty(\real)}=\infty \iff \lam=\lam_\infty.
\end{equation}
\end{theorem}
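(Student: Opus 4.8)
The plan is to start from the local branch of Theorem~\ref{local.thm} and continue it by the implicit function theorem, using the non-degeneracy of Lemma~\ref{nondegen.lem} to exclude turning points, the asymptotic bifurcation analysis of \cite{g3} to control the branch as $\Ve u\Ve_{H^1}\to\infty$, and the uniqueness of positive even solutions of \eqref{stat} to identify the resulting curve with the full set of positive solutions. First I would record that, for $\lam\in(0,\lam_0)$, the pair $(\lam,u_0(\lam))$ from Theorem~\ref{local.thm} solves \eqref{F.eq}, so by Lemmas~\ref{sym.lem} and~\ref{basic.lem} each $u_0(\lam)$ is even, lies in $C^2(\real)\cap H^2(\real)$ (from $u_0''=(\lam-f(x,u_0^2))u_0\in L^2$ since $f$ is bounded), is decreasing on $(0,\infty)$, decays exponentially, and satisfies $0<\lam<\lam_\infty$. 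Since $F\in C^1(\real\x H^1,H^{-1})$ and $D_2F(\lam,u_0(\lam))$ is an isomorphism, the implicit function theorem shows that near each $(\lam,u_0(\lam))$ the zero set of $F$ is exactly the graph of $u_0$; positivity persists under this continuation by continuity, (A0) and the maximum principle (on a fixed far region $f(x,\cdot)$ stays below the parameter, so a continued solution cannot become negative there, while near the origin it stays close to a strictly positive function). Hence $u_0$ extends uniquely to a maximal open interval $(\lam_-,\lam_+)$ carrying a $C^1$ curve $\lam\mapsto u(\lam)$ of solutions of \eqref{F.eq}, with $(0,\lam_0)\subseteq(\lam_-,\lam_+)\subseteq(0,\lam_\infty)$ by Lemma~\ref{basic.lem}(i); in particular $\lam_-=0$.

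Next I would show $\lam_+=\lam_\infty$. Suppose $\lam_+<\lam_\infty$ and fix a compact $K=[a,\lam_+]\subset(0,\lam_\infty)$. Choosing $\eps\in(0,a)$, assumption (A0) provides $r_\eps>0$ \emph{independent of the solution} with $f(x,s)\les\eps$ for $|x|\ges r_\eps$; then Lemma~\ref{basic.lem}(ii) gives $|u(\lam)(x)|\les\Ve u(\lam)\Ve_{L^\infty}\e^{-\sqrt{a-\eps}\,(|x|-r_\eps)}$ for $\lam\in K$, whence, together with the equation and $0\les f\les M$, one obtains $\Ve u(\lam)\Ve_{H^2}\les C_K\,\Ve u(\lam)\Ve_{L^\infty}$ on $K$. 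By the asymptotic bifurcation analysis of \cite{g3}, a sequence of positive solutions $(\lam_n,u_n)$ with $\Ve u_n\Ve_{H^1}\to\infty$ forces $\lam_n\to\lam_\infty$; hence $\sup_{K}\Ve u(\lam)\Ve_{H^1}<\infty$. Taking $\lam_n\uparrow\lam_+$, the $u(\lam_n)$ are bounded in $H^1(\real)$ with uniform exponential tails, so a subsequence converges strongly in $H^1(\real)$ to some $\ou\ges0$ solving \eqref{stat} at $\lam_+$; integrating by parts in $\intr u(\lam_n)\,u(\lam_n)''=-\intr (u(\lam_n)')^{2}$ and using \eqref{f0as} shows $\Ve u(\lam)\Ve_{L^\infty}\ges\delta(a)>0$ on $K$, so $\ou\not\equiv0$ and hence $\ou>0$ on $\real$. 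Then $(\lam_+,\ou)$ solves \eqref{F.eq}, and Lemma~\ref{nondegen.lem} together with the implicit function theorem extends the curve past $\lam_+$, contradicting maximality.

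It remains to read off the stated properties. For each $\lam\in(0,\lam_\infty)$, $(\lam,u(\lam))$ is a positive solution of \eqref{stat}, hence a positive even one by Lemma~\ref{sym.lem}; by the uniqueness of positive even solutions of \eqref{stat} it is the only one, so $u(\lam)$ is well defined as the unique positive solution, equals the $C^1$ curve just constructed, lies in $C^2(\real)\cap H^2(\real)$, is even, and satisfies Lemmas~\ref{sym.lem} and~\ref{basic.lem}. Since near $\lam=0$ the curve coincides with $u_0$, the relations \eqref{local.eq} follow from Theorem~\ref{local.thm}. For \eqref{asympt.eq}: if $\lam_n\to\lam_\infty$ and some subsequence of $\Ve u(\lam_n)\Ve_{L^\infty}$ were bounded, the lower bound $\Ve u(\lam_n)\Ve_{L^\infty}\ges\delta(\lam_\infty/2)$ and the compactness argument of the previous paragraph would produce a positive solution at $\lam=\lam_\infty$, contradicting Lemma~\ref{basic.lem}(i); hence $\Ve u(\lam_n)\Ve_{L^\infty}\to\infty$, and $\Ve u(\lam_n)\Ve_{H^2}\to\infty$ by the comparisons $\Ve u\Ve_{L^\infty}\les C\Ve u\Ve_{H^2}$ and $\Ve u\Ve_{H^2}\les C_K\Ve u\Ve_{L^\infty}$. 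Conversely, if $\lam_n\to\lam\in(0,\lam_\infty)$ then $\lam_n$ eventually lies in a compact subset of $(0,\lam_\infty)$, so $\Ve u(\lam_n)\Ve_{H^2}\les C_K\Ve u(\lam_n)\Ve_{L^\infty}$ stays bounded by the a~priori bound above; this gives \eqref{asympt.eq}.

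I expect the real difficulty to be entirely concentrated in the a~priori bound invoked in the second paragraph, namely that a sequence of positive solutions can blow up only as $\lam\to\lam_\infty$. Under the natural rescaling one must show that the normalised solutions converge to a \emph{positive} solution of the asymptotic linearisation \eqref{lin} on all of $\real$ --- rather than concentrating into a Dirichlet-type eigenfunction on a bounded interval, which would permit blow-up at some $\lam<\lam_\infty$ --- so that this limit, being a positive eigenfunction, must correspond to the principal eigenvalue $\lam_\infty$. This is exactly the topological asymptotic bifurcation analysis carried out in \cite{g3}, which I would import; the remaining ingredients (the implicit-function-theorem continuation, exponential decay, symmetry and non-degeneracy) are already available in Theorem~\ref{local.thm} and Lemmas~\ref{sym.lem}--\ref{nondegen.lem}.
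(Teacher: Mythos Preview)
Your argument is correct, but it is organised differently from the paper's proof. The paper does not build the global curve by implicit-function continuation from the local branch; instead it imports the \emph{entire} continuous curve $(0,\ly)\ni\lam\mapsto u(\lam)$ directly from Corollary~2 of \cite{g3} (after even extension from the half-line), which already yields existence for every $\lam\in(0,\ly)$ together with the asymptotic behaviour \eqref{asympt.eq}. Uniqueness of positive solutions (via Toland's separation-of-graphs method \cite{t}) is then used to identify this curve with the local branch of Theorem~\ref{local.thm}, and Lemma~\ref{nondegen.lem} is invoked only at the very end to upgrade continuity to $C^1$. By contrast, you use non-degeneracy \emph{constructively}, extending the local branch step by step via the implicit function theorem to a maximal interval, and you appeal to \cite{g3} only for the a~priori bound that blow-up can occur solely at $\ly$. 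Your route is closer to the strategy announced in the paper's introduction (``analytic continuation and an asymptotic analysis'') and makes the role of each ingredient more transparent; the paper's written proof is shorter because it outsources the global existence entirely to \cite{g3}. Both approaches rely on the same three pillars --- non-degeneracy, uniqueness, and the asymptotic analysis of \cite{g3} --- but in a different order. One small point: \cite{g3} treats the problem on the half-line, so when you invoke it you should note (as the paper does) that the even symmetry of solutions makes the half-line and full-line problems equivalent.
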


\begin{proof}
Global asymptotic bifurcation for \eqref{stat} on $(0,\infty)$ 
was established in \cite{g3} by degree theoretic arguments. 
Corollary~2 of \cite{g3} holds under the present hypotheses 
--- restricted to the problem on the half-line in the obvious manner. One
needs only remark that, in hypothesis (f2) of \cite{g3}, $f_0$ can be assumed
to be zero, without any change to the proof of Corollary~2. Moreover,
under hypothesis (A5), the assumption 
$\ls:=\limsup_{x\to\infty}f_\infty(x)>f_0$ can be relaxed
to an equality, allowing for \eqref{fio.eq}.
By even extension to $\real$, Corollary~2 of \cite{g3} 
then yields a continuous curve of solutions of
\eqref{F.eq}, $(0,\ly)\ni\lam\to u(\lam)$, 
with the asymptotic behaviour \eqref{asympt.eq}.

Under hypotheses (A4) and (A5), it
can be proved by the method of `separation of graphs', as presented in 
\cite{t}, that the positive solution of \eqref{stat} is unique, for
any $\lam\in(0,\ly)$. Therefore, the local
branch obtained in Theorem~\ref{local.thm} lies on this curve. Thus,
the bifurcation behaviour at $\lam=0$ follows from Theorem~\ref{local.thm},
whereas the asymptotic behaviour as $\lam\to\ly$ follows from
Corollary~2 of \cite{g3}.
Finally, Lemma~\ref{nondegen.lem} shows that 
$u\in C^1((0,\lam_\infty),H^1(\real))$, concluding the proof.
\end{proof}

%%%%%%%%%%%%%%%%%%%%%%%%%%%%%%%%%%%%%%%%%%%%%%%%%%%%%%%%%%%%%%%%%%%%%%%%%%
%%%%%%%%%%%%%%%%%%%%%%%%%%%%%%%%%%%%%%%%%%%%%%%%%%%%%%%%%%%%%%%%%%%%%%%%%%%

\section{Orbital stability of standing waves}\label{stability.sec}

Using the function $u\in C^1((0,\lam_\infty),H^1(\real,\real))$ given by
Theorem~\ref{global.thm}, standing wave solutions of
\eqref{NLS} are constructed as
\begin{equation}\label{stwave}
\psi_\lam(t,x):=\e^{i\lam t}u(\lam)(x), \quad \lam\in(0,\lam_\infty).
\end{equation}
The mapping $\lam\to\psi_\lam$ defines a smooth curve of solutions of \eqref{NLS}
in the space $C\big([0,\infty),H^1(\real,\complex)\big) \cap 
C^1\big((0,\infty),H^{-1}(\real,\complex)\big)$. 
General solutions of \eqref{NLS}
are functions $\ffi\in C\big([0,T),H^1(\real,\complex)\big) \cap 
C^1\big((0,T),H^{-1}(\real,\complex)\big)$ satisfying \eqref{NLS} in the weak
sense (with the identifications \eqref{ident1}-\eqref{ident2}), for all
$t\in(0,T)$. Here, $T>0$ determines the maximal interval of existence of the 
solution $\ffi$. If $T=\infty$, the solution is called {\em global} ---
this is obviously the case for standing waves. 

A prerequisite for the stability analysis of \eqref{NLS} is the global
well-posedness of the Cauchy problem. 
This is thoroughly investigated in \cite{caz},
for very general nonlinearities. We will only need the following result here,
which is proved in Section 3.5 of \cite{caz}. 

\begin{theorem}\label{caz.thm}
Let $\wt{f}$ be defined by \eqref{wtf}, with $f\in C^0(\real\times\real_+,\real)$, 
and suppose that there exist $C>0$ and $\sigma\in[0,4)$ such that
\begin{equation}\label{growth}
|\wt{f}(x,s)|\leq C(1+|s|^\sigma)|s|, \quad \text{for all} \ (x,s)\in \real^2.
\end{equation}
For any $\ffi_0\in H^1(\real,\complex)$, there is a unique global solution 
$\ffi\in C\big([0,\infty),H^1(\real,\complex)\big) $ of \eqref{NLS}, 
with initial condition $\ffi(0,\cdot)=\ffi_0$.
\end{theorem}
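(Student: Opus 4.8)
The plan is to follow the standard energy method for $L^2$-subcritical nonlinear Schr\"odinger equations, carried out in full generality in \cite[Section~3.5]{caz}; I only sketch how the hypotheses feed into it. The argument has three ingredients: (i) a local existence and uniqueness theory in $H^1(\real,\complex)$ together with the blow-up alternative (either $T=\infty$ or $\Ve\ffi(t)\Ve_{H^1(\real)}\to\infty$ as $t\to T^-$); (ii) conservation of charge and energy along the flow; and (iii) an a priori bound on $\Ve\ffi(t)\Ve_{H^1(\real)}$ that is uniform in time, extracted from (ii) via the subcriticality assumption $\sigma<4$. Granting (i) and (ii), the bound (iii) rules out the blow-up alternative and forces $T=\infty$.

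For (i) one works with the Duhamel formulation
$$
\ffi(t)=\e^{it\p^2_{xx}}\ffi_0+i\int_0^t\e^{i(t-s)\p^2_{xx}}\,f\big(\cdot,|\ffi(s)|^2\big)\ffi(s)\diff s,
$$
and, since in dimension one $H^1(\real)\hookrightarrow L^\infty(\real)$, the Nemytskii operator $\ffi\mapsto f(\cdot,|\ffi|^2)\ffi$ sends bounded sets of $H^1$ into bounded sets of $L^2$ by \eqref{growth}; a contraction argument in $C([0,\tau],H^1(\real,\complex))$ with $\tau$ depending only on $\Ve\ffi_0\Ve_{H^1}$ (as in \cite{caz}, exploiting the regularity of $f$) then yields a unique maximal solution, with no need for Strichartz estimates. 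For (ii), pairing \eqref{NLS} with $\ffi$ and taking imaginary parts gives $\Ve\ffi(t)\Ve_{L^2}=\Ve\ffi_0\Ve_{L^2}$, while pairing with $\p_t\ffi$ and taking real parts gives conservation of
$$
E(\ffi):=\tfrac12\Ve\ffi'\Ve_{L^2(\real)}^2-\tfrac12\intr F(x,|\ffi|^2)\diff x,\qquad
F(x,s):=\int_0^s f(x,t)\diff t,
$$
these formal manipulations being justified by the regularisation procedure of \cite{caz}.

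For (iii), the growth bound \eqref{growth} rewritten for $f$ reads $|f(x,\tau)|\les C(1+\tau^{\sigma/2})$, whence $|F(x,\tau)|\les C(\tau+\tau^{\sigma/2+1})$ and
$$
\Big|\intr F(x,|\ffi|^2)\diff x\Big|\les C\big(\Ve\ffi\Ve_{L^2}^2+\Ve\ffi\Ve_{L^{\sigma+2}}^{\sigma+2}\big).
$$
The one-dimensional Gagliardo--Nirenberg inequality gives $\Ve\ffi\Ve_{L^{\sigma+2}}^{\sigma+2}\les C\Ve\ffi'\Ve_{L^2}^{\sigma/2}\Ve\ffi\Ve_{L^2}^{\sigma/2+2}$, so, using energy conservation and then charge conservation,
$$
\tfrac12\Ve\ffi'(t)\Ve_{L^2}^2\les|E(\ffi_0)|+C\Ve\ffi_0\Ve_{L^2}^2+C\Ve\ffi_0\Ve_{L^2}^{\sigma/2+2}\,\Ve\ffi'(t)\Ve_{L^2}^{\sigma/2}.
$$
Since $\sigma<4$ we have $\sigma/2<2$, so Young's inequality with exponents $4/\sigma$ and $4/(4-\sigma)$ absorbs the last term into $\tfrac14\Ve\ffi'(t)\Ve_{L^2}^2$, leaving $\Ve\ffi'(t)\Ve_{L^2}\les C(\Ve\ffi_0\Ve_{H^1})$; combined with charge conservation, $\Ve\ffi(t)\Ve_{H^1}$ is bounded uniformly on the maximal interval, and hence $T=\infty$.

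I expect the main obstacle to be not the global a priori estimate — routine once subcriticality is invoked — but the careful construction of the local $H^1$-flow and the rigorous justification of the two conservation laws for a nonlinearity that is $x$-dependent and only assumed continuous in $s$; this is precisely the content of \cite[Section~3.5]{caz}, which is why the statement is quoted rather than reproved. The only place where the exponent range is essential is the strict inequality $\sigma<4$ (the $L^2$-critical threshold in dimension one): without it the absorption step in (iii) fails and finite-time blow-up may occur.
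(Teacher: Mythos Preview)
The paper does not prove this theorem at all; it simply quotes it from \cite[Section~3.5]{caz} (see the sentence immediately preceding the statement). Your sketch is correct and is precisely the standard argument one finds in Cazenave: local $H^1$ well-posedness with the blow-up alternative, conservation of charge and energy, and the subcritical Gagliardo--Nirenberg estimate combined with Young's inequality (using $\sigma/2<2$) to absorb the nonlinear term and obtain a uniform-in-time $H^1$ bound. You have also correctly identified the one delicate point --- that the local theory and the rigorous justification of the conservation laws for an $x$-dependent, merely continuous nonlinearity are nontrivial and are exactly what \cite[Section~3.5]{caz} supplies --- so there is nothing to add.
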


Under assumption (A'), we can take $\sigma=0$ in \eqref{growth} 
(see \eqref{fbounded}), hence the conclusion of Theorem~\ref{caz.thm} holds.

\begin{definition}\rm
We say that the standing wave $\psi_\lam$ in \eqref{stwave} 
is orbitally stable if
$$
\forall\, \eps>0 \ \exists\, \delta>0 \ \text{such that}
$$
for any (global) solution $\ffi(t,x)$ of \eqref{NLS} with initial data 
$\ffi_0\in H^1(\real)$ we have
$$
\Ve \ffi_0-u(\lam)\Ve_{H^1}\les\delta \implies
\inf_{\theta\in\real}\Ve \ffi(t,\cdot)-\e^{i\theta}u(\lam)\Ve_{H^1}\les\eps
\quad \forall\, t \ges 0.
$$
\end{definition}

A general theory of orbital stability for infinite-dimensional Hamiltonian systems 
was established in \cite{gss} --- see also \cite{stuart2008}, where this issue was 
revisited in great detail, and applied to \eqref{NLS}.  
The stability of a 
standing wave $\psi_\lambda$ is related to spectral properties of the 
linearization of \eqref{NLS} at $\psi_\lambda$. 
When the spectral conditions are 
satisfied, it can be inferred from \cite{stuart2008} that, 
in the present context:\footnote{Note that the parameter $\lam$ in 
\cite[Section~7]{stuart2008} has an opposite sign from ours.}
\begin{equation}\label{slope}
\psi_\lambda(t,x)=\e^{i\lambda t}u_\lambda(x) \ 
\text{is orbitally stable if} \  
\frac{\dif}{\dif\lam}\intr u(\lam)(x)^2 \diff x>0. 
\end{equation}

This condition is often referred to as the {\em slope condition}.

\subsection{The spectral conditions}

Following the discussion in \cite{stuart2008} (see in particular part (5)
of the summary in \cite[Section~7.4]{stuart2008}), the spectral
conditions pertain to the linear operators $L_\lam^1, L_\lam^2: 
H^2(\real,\real)\subset L^2(\real,\real) \to L^2(\real,\real)$ defined by
\begin{align}\label{ops}
L_\lam^1 v  := -v''-\partial_2 \wt{f}(x,u(\lam))v + \lam v,\quad
L_\lam^2 v  := -v''-\frac{\wt{f}(x,u(\lam))}{u(\lam)}v + \lam v.
\end{align}

Let us denote by $M(A)$ the Morse index of a
self-adjoint operator $A:D(A)\subset L^2\to L^2$, defined by
$$
M(A):=\sum_{E \in\mathcal{E}} \dim E ,
$$
where $\mathcal{E}$ is the collection of all eigenspaces corresponding to negative 
eigenvalues of $A$ (and $M(A):=0$ if $\mathcal{E}=\emptyset$). We will also 
denote by $\sigma(A)$ and $\sigma_\mathrm{e}(A)$ the spectrum of $A$ and 
the essential spectrum of $A$, respectively. Then the spectral conditions
required by the stability analysis are the following:

\medskip
\begin{itemize}
\item[\bf(S1)]
$\inf\sigma_\mathrm{e}(L_\lam^1)>0$, $M(L_\lam^1)=1$ and 
$\ker L_\lam^1 =\{0\}$;
\item[\bf(S2)] 
$\inf\sigma_\mathrm{e}(L_\lam^2)>0$, $0=\inf\sigma(L_\lam^2)$  and 
$\ker L_\lam^2=\mathrm{span}\{u(\lam)\}$.
\end{itemize}

\begin{proposition}\label{spectral.prop}
Let (A') hold, $\wt{f}$ be defined by \eqref{wtf}, and
$u\in C^1((0,\lam_\infty),H^1(\real))$ be given by Theorem~\ref{global.thm}.
Then the operators 
$L_\lam^1, L_\lam^2: H^2(\real)\subset L^2(\real) \to L^2(\real)$ 
defined in \eqref{ops} satisfy (S1) and (S2), respectively.
\end{proposition}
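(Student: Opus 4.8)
The plan is to verify (S1) and (S2) separately, exploiting the structure of $L_\lam^1$ and $L_\lam^2$ as one-dimensional Schr\"odinger operators on the whole line, together with the qualitative information about $u(\lam)$ furnished by Lemmas~\ref{sym.lem}, \ref{basic.lem} and \ref{nondegen.lem}. First I would treat the essential spectrum: writing $w(\lam):=u(\lam)$ for brevity, the potentials in \eqref{ops} are $-\p_2\wt f(x,w)$ and $-\wt f(x,w)/w$. Since $w(\lam)\in H^1(\real)$ decays exponentially (Lemma~\ref{basic.lem}(ii)) and $w(\lam)(x)\to0$ as $|x|\to\infty$, assumptions (A0) and (A6)(ii) force both potentials to tend to $0$ at infinity --- this is the same computation as \eqref{lim.eq}, noting $\wt f(x,w)/w=f(x,w^2)$ and $\p_2\wt f(x,w)=2\p_2f(x,w^2)w^2+f(x,w^2)$. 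Hence both operators are relatively compact perturbations of $-\partial^2_{xx}+\lam$, so $\sigma_\mathrm{e}(L_\lam^i)=[\lam,\infty)$, and since $\lam>0$ by Lemma~\ref{basic.lem}(i), we get $\inf\sigma_\mathrm{e}(L_\lam^i)=\lam>0$, disposing of the first part of each of (S1), (S2).

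Next I would handle $L_\lam^2$. The key observation is that $L_\lam^2 w = -w''-f(x,w^2)w+\lam w = 0$ by \eqref{stat}, so $w\in\ker L_\lam^2$. Because $w>0$ on all of $\real$, $w$ is --- up to scalar multiple --- the unique positive $L^2$-solution, hence it is the \emph{ground state} of the Schr\"odinger operator $L_\lam^2$: by Sturm--Liouville / Perron--Frobenius theory for $-\partial^2_{xx}+V$ on $\real$, a positive eigenfunction corresponds to the lowest eigenvalue, which is therefore $0$, and it is simple. This gives $0=\inf\sigma(L_\lam^2)$ and $\ker L_\lam^2=\mathrm{span}\{w\}$ at once, completing (S2).

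For (S1), the operator $L_\lam^1$ differs from $L_\lam^2$ by the strictly negative perturbation $-[\p_2\wt f(x,w)-\wt f(x,w)/w] = -2\p_2f(x,w^2)w^2$, which is $<0$ for all $x\neq0$ by (A5) (and vanishes at $x=0$ since $w(0)$ is finite while the factor $w^2$... actually $w(0)>0$, so it is $<0$ at $x=0$ too). Thus $L_\lam^1 < L_\lam^2$ in the form sense, so by the variational characterization of eigenvalues $\inf\sigma(L_\lam^1)<\inf\sigma(L_\lam^2)=0$, whence $M(L_\lam^1)\ges1$. To get $M(L_\lam^1)=1$ exactly, and simultaneously $\ker L_\lam^1=\{0\}$: the function $z:=w'=u(\lam)'$ satisfies $z<0$ on $(0,\infty)$, $z>0$ on $(-\infty,0)$, $z(0)=0$, so $z$ has exactly one zero; differentiating \eqref{stat} shows $L_\lam^1 z = -\p_1 f(x,w^2)w$, and by (A5) the right-hand side is $<0$ for $x>0$ and $>0$ for $x<0$, i.e. $L_\lam^1 z$ has the opposite sign of $z$ off the origin --- so $z$ is a subsolution-type comparison function with one nodal domain on each side. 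The standard Sturm oscillation argument then pins the number of eigenvalues below any given level; more concretely, one argues that if $L_\lam^1$ had a second negative eigenvalue the corresponding eigenfunction (which changes sign exactly once, by Sturm) together with $z$ would produce a contradiction via a Wronskian/Lagrange-identity computation of the type used repeatedly in the proof of Lemma~\ref{nondegen.lem}. Finally $\ker L_\lam^1=\{0\}$ is equivalent to the nondegeneracy already proved in Lemma~\ref{nondegen.lem}: indeed $D_2F(\lam,w)v = v''+\p_2\wt f(x,w)v - \lam v = -L_\lam^1 v$, so $\ker L_\lam^1 = \ker D_2F(\lam,w) = \{0\}$ by Lemma~\ref{nondegen.lem}.

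The main obstacle is the sharp count $M(L_\lam^1)=1$: establishing the lower bound $\ges1$ is immediate from the form comparison with $L_\lam^2$, but ruling out a \emph{third} negative eigenvalue (beyond the ground state and the one detected by $z$) requires a genuine oscillation-theoretic argument. The cleanest route is probably to note that $z=u'$ is an eigenfunction-like object with exactly one sign change and that $0$ is \emph{not} an eigenvalue (by Lemma~\ref{nondegen.lem}), so by Sturm--Liouville theory $L_\lam^1$ has precisely the ground state eigenvalue $\mu_0<0$ and a second eigenvalue $\mu_1$; one must then show $\mu_1<0<\mu_2$, i.e. that $z$ is genuinely the \emph{first excited state}. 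This is where I would invoke, as the author clearly intends, the detailed nodal analysis already carried out in the proof of Lemma~\ref{nondegen.lem} (even part of a kernel element must vanish somewhere on $(0,\infty)$, forcing sign changes), transplanted from the kernel computation to the comparison of $L_\lam^1$ with $-\partial^2_{xx}+\lam$ on each half-line with Dirichlet/Neumann data --- essentially the same bookkeeping that appears in \cite{js} and \cite{mst} for the analogous operators there.
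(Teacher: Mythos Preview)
Your treatment of the essential spectrum, of (S2), and of $\ker L_\lam^1=\{0\}$ via Lemma~\ref{nondegen.lem} is correct and coincides with the paper's argument. The divergence, and the difficulty, lies entirely in establishing $M(L_\lam^1)=1$.

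Here the paper takes a different and more economical route than yours: rather than attempting a direct oscillation count at each fixed $\lam$, it observes that $M(L_{\lam}^1)=1$ for $\lam$ close to $0$ (a by-product of the perturbative construction behind Theorem~\ref{local.thm}, cf.\ \cite[Lemma~13]{g1}), that the isolated eigenvalues of $L_\lam^1$ vary continuously with $\lam\in(0,\ly)$, and that Lemma~\ref{nondegen.lem} forbids any eigenvalue to cross $0$. Hence $M(L_\lam^1)$ is constant along the curve, equal to $1$. This continuation argument uses the global smooth curve in an essential way and avoids any nodal bookkeeping.

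Your direct approach, by contrast, contains genuine errors. Differentiating \eqref{stat} gives $L_\lam^1 z = \p_1 f(x,u^2)\,u$ (this is exactly \eqref{z.eq} rearranged), not $-\p_1 f(x,u^2)\,u$; with the correct sign, $L_\lam^1 z$ has the \emph{same} sign as $z$ on each half-line (both are negative on $(0,\infty)$), not the opposite sign as you assert. More seriously, your final paragraph aims at the wrong target: you write that one must show ``$\mu_1<0<\mu_2$'' and speak of ``ruling out a third negative eigenvalue'', but Morse index $1$ means $\mu_0<0\les\mu_1$ --- one must rule out a \emph{second} negative eigenvalue, and $z$ does not ``detect'' one. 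A correct direct argument does exist along these lines: since the potential of $L_\lam^1$ is even, any second eigenfunction is odd, hence positive on $(0,\infty)$ up to sign; pairing it on $(0,\infty)$ with $-z>0$, which satisfies $L_\lam^1(-z)=-\p_1 f(x,u^2)\,u>0$ there, forces the corresponding eigenvalue to be positive. But that is not what you wrote, and the paper's continuation argument sidesteps the issue entirely.
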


\begin{proof}
First of all, an argument similar to the proof of \cite[Lemma~3.4~(i)]{stuart2006} 
shows that all eigenvalues of $L_\lam^1$ and $L_\lam^2$ are simple. 
Also,
$$
\lim_{|x|\to\infty} \partial_2 \wt{f}(x,u(\lam)) =
\lim_{|x|\to\infty} \frac{\wt{f}(x,u(\lam))}{u(\lam)} = 0
\implies  \inf\sigma_\mathrm{e}(L_\lam^i)=\lam>0, \ i=1,2.
$$

We now complete the proof of (S1). First, arguments 
similar to the proof of \cite[Lemma~13]{g1} show that $M(L_\lam^1)=1$
for $\lam>0$ small enough. Since 
$\Vert \partial_2 \wt{f}(x,u(\lam))\Vert_{L^\infty}$ depends continuously
on $\lam\in(0,\ly)$, it follows from the min-max characterization of eigenvalues
(see e.g. \cite[Section~XIII.1]{reed}), that the (isolated) eigenvalues of 
$L_\lam^1$ depend continuously on $\lam\in(0,\ly)$. But $\ker L_\lam^1 =\{0\}$ 
for all $\lam\in(0,\ly)$ by the proof of Lemma~\ref{nondegen.lem}, which prevents
eigenvalues from `crossing zero' as $\lam$ varies.
Hence $M(L_\lam^1)=1$ for all $\lam\in(0,\ly)$.

Regarding (S2), noting that
$$
L_\lam^2 v  = -v''-f(x,u(\lam)^2)v + \lam v, \quad v\in H^2(\real),
$$
it follows from Theorem~\ref{global.thm} that $u(\lam)\in\ker L_\lam^2$.
The eigenvalues of $L_\lam^2$ being simple, 
$\ker L_\lam^2=\mathrm{span}\{u(\lam)\}$, $\lam\in(0,\ly)$. Finally, since
$\inf\sigma_\mathrm{e}(L_\lam^2)>0$,
$0$ is an isolated eigenvalue, with corresponding eigenfunction
$u(\lam)>0$. It follows that $0=\inf\sigma(L_\lam^2)$,
which completes the proof.
\end{proof}

%%%%%%%%%%%%%%%%%%%%%%%%%%%%%%%%%%%%%%%%%%%%%%%%%%%%%%%%%%%%%%%%%%%%

\subsection{The slope condition}\label{slope.sec}

In order to show that the slope condition is verified, 
we will need the following additional assumption.

\medskip
\begin{itemize}
\item[\bf (H)] The function
$$
(x,s) \to \frac{2f(x,s)+x\p_1f(x,s)}{\p_2f(x,s)s}-1, \quad x>0, \ s>0,
$$
is positive and non-increasing as $x$ increases and $s$ decreases.
\end{itemize}

\begin{example}\label{ex4}\rm
Under the hypotheses of Examples~\ref{ex2} and \ref{ex3}, it is easily seen
that the function $f$ defined by \eqref{ex.eq} also satisfies assumption (H),
provided that $-b\les xV'(x)/V(x)<0$ for $x>0$, and that the mapping 
$x\to xV'(x)/V(x)$ is non-increasing for $x>0$.		

Let us now summarize our hypotheses on $V:\real\to\real$ and $\alpha>0$ 
ensuring that the function $f$ defined in Example~\ref{ex1} satisfies (A') and (H):
\begin{itemize}
\item[(a)] $V\in C^1(\real)$ is even, $V>0$ on $\real$, and $V'(x)<0$ for $x>0$;
\item[(b)] there exists $b\in(0,1)$ such that  
$V$ satisfies \eqref{V}; 
\item[(c)] letting $\rho(x):=xV'(x)/V(x)$, we have
that $\rho:(0,\infty)\to\real$ is non-increasing, with $-b\les \rho(x)<0$ 
for all $x>0$;
\item[(d)] $1\les\alpha<2-b$.
\end{itemize}
A function $V$ satisfying properties (a)-(c) is given by 
$$V(x)=(1+x^2)^{-b/2}.$$ 
Note that $\alpha<2-b$ corresponds to $p<5-2b$ in assumption (A2)
(see Example~\ref{ex2}).
Under assumptions (a)-(d), we can take $f$ defined by \eqref{ex.eq} in 
\eqref{NLS} and \eqref{stat}, and the conclusions of Theorem~\ref{global.thm}
and Theorem~\ref{stability.thm} below hold true.

\smallskip
\noindent{\bf N.B.} A more general form of \eqref{ex.eq} can be considered,
e.g. $f(x,s)=V(x)\phi(s)$, with the above assumptions on $V$, and
$\phi\in C^1(\real_+)$ satisfying:
\begin{itemize}
\item[$(\phi 1)$] $\phi(s)>0$ and $\phi'(s)>0$ for $s>0$;
\item[$(\phi 2)$] letting $\Phi(s):=s\phi'(s)/\phi(s)$, there exists
$\alpha:=\lim_{s\to0^+}\Phi(s)\in[1,2-b)$, 
$\Phi:(0,\infty)\to\real$ is non-increasing, 
and $0<\Phi(s)\les \alpha$ for all $s>0$;
\item[$(\phi 3)$] $\disp \lim_{s\to\infty}\phi(s)=1$.
\end{itemize}
In particular, conditions (c) and $(\phi 2)$ 
(simply (c) and (d) in the special case \eqref{ex.eq})
ensure that (H) is satisfied.
\end{example}

In view of Proposition~\ref{spectral.prop} and \cite{stuart2008}, the orbital
stability of the standing waves $\psi_\lam$ in \eqref{stwave} will be
established if we prove the following result.

\begin{proposition}\label{slope.prop}
Let hypotheses (A') and (H) hold, 
and $u\in C^1((0,\lam_\infty),H^1(\real))$ 
be given by Theorem~\ref{global.thm}. Then
\begin{equation}\label{slop}
\frac{\dif}{\dif\lam}\intr u(\lam)(x)^2 \diff x>0
\end{equation}
for all $\lam\in(0,\ly)$.
\end{proposition}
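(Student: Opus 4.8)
\emph{Sketch of the approach.} The plan is to differentiate the solution curve in $\lam$, obtain two integral identities by testing the linearised equation against suitable functions, combine them using hypothesis (H), and read off the sign of $\tfrac{\dif}{\dif\lam}\intr u(\lam)^2\diff x$ from the nodal structure of $\partial_\lam u(\lam)$. Write $u=u(\lam)$ and $\dot u:=\partial_\lam u(\lam)$; since the curve is $C^1$ by Theorem~\ref{global.thm}, $\dot u\in H^1(\real)$ exists, is even (by uniqueness and evenness of $u(\lam)$), and differentiating \eqref{stat} in $\lam$ gives $L_\lam^1\dot u=-u$, with $L_\lam^1$ as in \eqref{ops}; in particular $\dot u\in H^2(\real)$, and $N(\lam):=\Ve u(\lam)\Ve_{L^2}^2$ satisfies $N'(\lam)=2\intr u\dot u\diff x$. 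First I would record two identities. Using \eqref{stat} one computes $L_\lam^1 u=-2\p_2 f(x,u^2)u^3$, so pairing $L_\lam^1\dot u=-u$ with $u$ and using self-adjointness yields
$$
\intr \p_2 f(x,u^2)u^3\dot u\diff x=\tfrac12 N(\lam).
$$
Differentiating \eqref{stat} in $x$ gives $L_\lam^1(xu')=\big(2f(x,u^2)+x\p_1 f(x,u^2)-2\lam\big)u$ (and $xu'\in H^2(\real)$ by the exponential decay in Lemma~\ref{basic.lem}); pairing this with $\dot u$ and integrating $\intr xu'u\diff x$ by parts yields
$$
\intr\big(2f(x,u^2)+x\p_1 f(x,u^2)-2\lam\big)u\dot u\diff x=\tfrac12 N(\lam).
$$
This second identity is, up to the term produced by $f_0$, the one derived in \cite{mst}; the point is that with $f_0\equiv0$ it remains usable under (AL).

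Subtracting the first identity from the second, the $N(\lam)$-terms cancel and the remaining integral equals $2\lam\intr u\dot u=\lam N'(\lam)$:
$$
\lam N'(\lam)=\intr\big(2f+x\p_1 f-\p_2 f\,u^2\big)u\dot u\diff x.
$$
On $\oi$, where $\p_2 f(x,u^2)>0$ and $u>0$ by (A5), one may write $2f+x\p_1 f-\p_2 f\,u^2=\p_2 f(x,u^2)u^2\,\wt h(x)$ with $\wt h(x):=\big[\tfrac{2f(x,s)+x\p_1 f(x,s)}{\p_2 f(x,s)\,s}-1\big]_{s=u(x)^2}$, which by (H), the evenness of $u$, and $u'<0$ on $\oi$ (Lemma~\ref{sym.lem}), is even, positive, and non-increasing on $\oi$. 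Hence, by evenness, $\lam N'(\lam)=2\intoi w\,\wt h\diff x$, where $w:=\p_2 f(x,u^2)u^3\dot u$, and the first identity gives $\intoi w\diff x=\tfrac14 N(\lam)>0$; since $\p_2 f\,u^3>0$ on $\oi$, $\dot u>0$ on a subset of $\oi$ of positive measure. It remains to show $\intoi w\,\wt h\diff x>0$.

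The crux — and, I expect, the main obstacle — is the nodal structure of $\dot u$: I would prove that $\{\dot u>0\}$ is a single interval $(-x_*,x_*)$ with $x_*\in(0,\infty]$. It is nonempty and symmetric by the above and by evenness, and it has at most one connected component because $M(L_\lam^1)=1$ (Proposition~\ref{spectral.prop}): on any component $C$ the function $v:=\dot u\,\mathbf{1}_C$ lies in $H^1(\real)$, and an integration by parts — the boundary terms vanishing since $\dot u=0$ on $\partial C$ and $\dot u,\dot u'\to0$ at $\pm\infty$ — gives $\la L_\lam^1 v,v\ra=\int_C (L_\lam^1\dot u)\,\dot u\diff x=-\int_C u\dot u\diff x<0$; so two or more components would exhibit a two-dimensional subspace on which $L_\lam^1$ is negative, contradicting $M(L_\lam^1)=1$. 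Consequently $\{\dot u>0\}=(-x_*,x_*)$, so $w>0$ on $(0,x_*)$ and $w\les0$ on $(x_*,\infty)$. If $x_*=\infty$ then $\intoi w\,\wt h>0$ is immediate; if $x_*<\infty$, monotonicity of $\wt h$ gives $w\,\wt h\ges\wt h(x_*)\,w$ both on $(0,x_*)$ and on $(x_*,\infty)$ (there because $w\les0$ and $\wt h\les\wt h(x_*)$), whence $\intoi w\,\wt h\diff x\ges\wt h(x_*)\intoi w\diff x=\tfrac14\wt h(x_*)N(\lam)>0$ as $\wt h(x_*)>0$ by (H). Either way $\lam N'(\lam)>0$, which is \eqref{slop}. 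Besides this nodal analysis, the remaining points — the two identities (regularity, decay, vanishing of boundary terms) and the fact that (H) is precisely what makes the final monotonicity estimate close — should be routine.
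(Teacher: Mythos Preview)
Your proof is correct and follows the same overall architecture as the paper's: the same integral identity (the paper's \eqref{intid}, which you obtain more directly via $L_\lam^1(xu')=(2f+x\p_1f-2\lam)u$ and self-adjointness, whereas the paper passes through a Pohozaev computation and then differentiates in $\lam$), the same first identity \eqref{intid1}, and the same monotonicity trick with $\wt h$ (the paper's $\zeta$) at the end --- you argue directly where the paper argues by contradiction, but the inequality is identical.

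The one substantive difference is how you obtain the nodal structure of $\dot u$. The paper proves this as Lemma~\ref{propofxi} by pure ODE methods: Wronskian identities between $\xi$ and $z=u'$, together with a de l'Hospital argument to rule out $\xi\ges0$ near infinity. Your route instead invokes $M(L_\lam^1)=1$ from Proposition~\ref{spectral.prop}: two positivity components of $\dot u$ would yield a two-dimensional subspace on which the quadratic form of $L_\lam^1$ is negative, contradicting the Morse index. This is a clean alternative that exploits spectral information already established in the paper; it is arguably more conceptual and avoids the somewhat delicate asymptotic analysis of Lemma~\ref{propofxi}. The paper's ODE argument, on the other hand, is self-contained (it does not presuppose the Morse-index count) and gives slightly more --- it shows $\xi$ actually has a zero, i.e.\ $x_*<\infty$ --- though you do not need that, since you handle $x_*=\infty$ separately.
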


Our proof of Proposition~\ref{slope.prop} is very similar to that of (1.5)
in \cite[Theorem~1.7]{g2}, 
where a power nonlinearity is considered. Since the structure
of the nonlinearity is quite different in the present case, one needs to check
carefully that every step of the proof carried out in Section~5.2 of \cite{g2}
works under assumption (A'). Therefore, we will present the whole argument here.

We will suppose that assumption (A') holds for the rest of this section. We 
already know from Theorem~\ref{global.thm} --- more precisely from bifurcation
at $\lam=0$ --- that there is some $\lam>0$ for which \eqref{slop} holds.
Therefore, by continuity, we need only show that 
\begin{equation}\label{nonzero}
\frac{\dif}{\dif\lam} \intr u(\lam)(x)^2 \diff x
\equiv 2 \frac{\dif}{\dif\lam} \intoi u(\lam)(x)^2 \diff x  \neq0
\quad \forall\,\lam\in(0,\ly). 
\end{equation}
Now,
$$
\frac{\dif}{\dif\lam} \intoi u(\lam)(x)^2 \diff x 
= 2\intoi u(\lam)(x)\xi(\lam)(x)\diff x,
$$
where
$$
\xi(\lam):=\frac{\dif u}{\dif\lam}(\lam)\in H^1(\real) ,
\quad \text{for all} \ \lam\in(0,\ly).
$$
We will use a fairly involved integral indentity derived from the equations 
satisfied by $u(\lam)$ and $\xi(\lam)$, namely \eqref{stat}
and
\begin{equation}\label{equforxi}
\xi'' + f(x,u^2)\xi + 2\p_2 f(x,u^2) u^2 \xi = \lam \xi + u.
\end{equation}
(We will omit the variables $\lam$ and/or $x$ when no confusion is possible.)
The equation for $\xi$ is easily obtained by differentiation of the identity 
$F(\lam,u(\lam))=0$ with respect to $\lam$, where $F$ is defined in \eqref{F.def}. 
We will first prove two lemmas establishing the required integral identity and 
some useful properties of $\xi$. 
Proposition~\ref{slope.prop} will then be proved using these results. 

The strategy of proof applied here was first used in 
\cite{mst}, where the authors considered a similar problem, however with a 
power-type nonlinearity (although their proof allows for more general situations
--- albeit not the asymptotically linear case),
and a non-trivial linear potential.

\begin{lemma}\label{intid.lem}
For $u,\xi\in H^1(\real)$ as above, the following identity holds:
\begin{equation}\label{intid}
\intoi [2f(x,u^2)+x\p_1f(x,u^2)-\p_2f(x,u^2)u^2]u\xi\diff x
=2\lam\intoi u\xi\diff x.
\end{equation}
\end{lemma}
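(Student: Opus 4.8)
The identity \eqref{intid} is a Pohozaev-type relation: the $2f+x\p_1f$ terms smell like what you get from multiplying \eqref{stat} by $xu'$ and integrating by parts, while the $\p_2f\,u^2$ and the right-hand side come from pairing \eqref{stat} and \eqref{equforxi} against each other. So the plan is to combine two ingredients on the half-line $(0,\infty)$: (i) a "Pohozaev on a half-line" identity for $u(\lam)$ obtained by testing \eqref{stat} with $xu'$, and (ii) the Wronskian/Lagrange identity for the pair $(u,\xi)$ obtained from \eqref{stat} and \eqref{equforxi}. Throughout I will use, without further comment, that $u(\lam),\xi(\lam)\in H^1(\real)\cap C^2(\real)$, that $u$ is even with $u'(0)=0$ and $u>0$ (Lemma~\ref{sym.lem}), and that $u,u',\xi,\xi'$ all decay exponentially at $\pm\infty$ (Lemma~\ref{basic.lem}, and the analogous estimate for $\xi$ coming from differentiating the equation), so that every boundary term at $x=+\infty$ vanishes.

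\textbf{Step 1: Pohozaev identity on $(0,\infty)$.} Multiply \eqref{stat} by $xu'$ and integrate over $(0,\infty)$. The term $\int_0^\infty xu'u''\diff x=\int_0^\infty \tfrac12 x\,(u'^2)'\diff x=-\tfrac12\int_0^\infty u'^2\diff x$ after integrating by parts (the boundary term at $0$ vanishes because of the factor $x$, and at $\infty$ by decay). For the nonlinear term, write $f(x,u^2)u\,xu' = x\,\p_x\!\big(G(x,u(x)^2)\big) - x\,\p_1G(x,u^2)$ where $G(x,t):=\tfrac12\int_0^t f(x,\tau)\diff\tau$, so that $\p_1 G(x,u^2)=\tfrac12\int_0^{u^2}\p_1f(x,\tau)\diff\tau$ and $\p_t G(x,t)=\tfrac12 f(x,t)$; then $\int_0^\infty xf(x,u^2)uu'\diff x = -\int_0^\infty G(x,u^2)\diff x - \int_0^\infty x\,\p_1G(x,u^2)\diff x$ after integrating the first piece by parts. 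The term $\lam\int_0^\infty xuu'\diff x = -\tfrac{\lam}{2}\int_0^\infty u^2\diff x$. Assembling,
\begin{equation}\label{pohoz}
-\tfrac12\intoi u'^2\diff x + \tfrac{\lam}{2}\intoi u^2\diff x
= -\intoi G(x,u^2)\diff x - \intoi x\,\p_1G(x,u^2)\diff x .
\end{equation}
I should also record the plain energy identity (multiply \eqref{stat} by $u$ and integrate): $\intoi u'^2\diff x = \intoi f(x,u^2)u^2\diff x - \lam\intoi u^2\diff x$, which lets me eliminate $\intoi u'^2$.

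\textbf{Step 2: the Lagrange identity for $(u,\xi)$.} Multiply \eqref{stat} by $\xi$, multiply \eqref{equforxi} by $u$, subtract, and integrate over $(0,\infty)$. The second-derivative terms give $\int_0^\infty (u''\xi - \xi''u)\diff x = (u'\xi - \xi'u)\big|_0^\infty = 0$, since at $x=0$ we have $u'(0)=0$ and at $\infty$ both $u,\xi$ decay (one also needs $\xi'(0)=0$, which holds because $\xi$ inherits evenness from $u$ by differentiating the equation in $\lam$; alternatively only $u'(0)=0$ is needed for this particular combination). The $f(x,u^2)u\xi$ terms cancel, and one is left with
\begin{equation}\label{lagrangexi}
-2\intoi \p_2f(x,u^2)u^2\,u\xi\diff x = -\intoi u^2\diff x ,
\quad\text{i.e.}\quad
2\intoi \p_2f(x,u^2)u^3\xi\diff x = \intoi u^2\diff x .
\end{equation}

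\textbf{Step 3: differentiate the Pohozaev relation in $\lam$.} This is the crux. Differentiate \eqref{pohoz} (after using the energy identity to rewrite it purely in terms of $u$, $f$, $G$) with respect to $\lam$, using $\tfrac{d}{d\lam}u=\xi$ and the chain rule on the $\lam$-dependence through $u$. One gets terms of the form $\intoi (\cdots)\,u\xi\diff x$ and $\intoi \lam u\xi\diff x$, plus a term from the explicit $\lam$ in \eqref{pohoz}. The terms $\tfrac{d}{d\lam}G(x,u^2) = f(x,u^2)u\xi$ and $\tfrac{d}{d\lam}\p_1G(x,u^2)=\tfrac12\p_1f(x,u^2)\cdot 2u\xi = \p_1f(x,u^2)u\xi$ produce exactly the $2f$ and $x\p_1f$ coefficients in \eqref{intid}; the derivative of the energy-identity substitution produces the $\p_2f\,u^2$ coefficient (via $\tfrac{d}{d\lam}f(x,u^2)u^2 = 2\p_2f\,u^3\xi + 2f u\xi$, etc.); and collecting the $\lam$-linear pieces gives the $2\lam\intoi u\xi\diff x$ on the right. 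Careful bookkeeping of the coefficients, using \eqref{lagrangexi} to absorb one stray $\intoi u^2$, should collapse everything to precisely \eqref{intid}.

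\textbf{Main obstacle.} The only real difficulty is the bookkeeping in Step 3: correctly differentiating the $G$ and $\p_1G$ integrals under the integral sign and matching every coefficient, while keeping track of which $\intoi u^2$ and $\intoi u\xi$ terms come from where — and justifying differentiation under the integral sign, which is where assumptions (A6) (uniform bounds and equicontinuity of $\p_1f,\p_2f$) together with the $C^1$-dependence $u\in C^1((0,\ly),H^1)$ and the exponential decay estimates of Lemma~\ref{basic.lem} are needed to dominate the integrands. An alternative, perhaps cleaner, route that avoids differentiating in $\lam$ altogether is to multiply \eqref{equforxi} by $xu'$ (and \eqref{stat} by an appropriate combination involving $\xi$ and $x\xi'$), integrate, and combine directly with \eqref{lagrangexi}; this produces the Pohozaev-type weight $x\p_1f$ on $\xi$ directly and is essentially the computation in Section~5.2 of \cite{g2}. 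Either way, the assumptions (A') are exactly what make every integration by parts and every limit at $\pm\infty$ legitimate, and no use of (H) is needed for this lemma — (H) enters only afterwards, in deducing the sign in Proposition~\ref{slope.prop}.
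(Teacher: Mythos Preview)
Your plan is essentially the paper's own proof: the paper multiplies \eqref{stat} by $u$ and by $xu'$, adds the two resulting identities to obtain exactly your combined Pohozaev/energy relation (the paper's \eqref{intid4}), differentiates this in $\lam$, and then uses the Lagrange identity \eqref{lagrangexi} (the paper's \eqref{intid1}) to eliminate the residual $\intoi u^2$. Two small corrections to your sketch: the right-hand side of your displayed \eqref{pohoz} should carry plus signs (so that, after substituting the energy identity, one recovers $\intoi f(x,u^2)u^2 + 2\intoi G + 2\intoi x\,\p_1G = 2\lam\intoi u^2$), and in Step~2 the boundary term at $x=0$ really does require $\xi'(0)=0$ (since $u(0)>0$), so your parenthetical ``alternatively only $u'(0)=0$ is needed'' is not correct --- but $\xi'(0)=0$ does hold by the evenness argument you give.
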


\begin{proof}
The proof follows that of \cite[Lemma~5.2]{g2}.
Let us first remark that, using the properties of $u$, it follows from
\eqref{equforxi} that $\xi\in C^2(\real)\cap H^2(\real)$, and $\xi$ is
even with $\xi'(0)=0$. Integrating the Lagrange identity for \eqref{stat}
and \eqref{equforxi} then yields
\begin{equation}\label{intid1}
\intoi u^2 \diff x=2\intoi \p_2f(x,u^2)u^3\xi \diff x .
\end{equation}
On the other hand, multiplying \eqref{stat} by $u$ and integrating gives
\begin{equation}\label{intid2}
\intoi f(x,u^2)u^2-(u')^2\diff x = \lam\intoi u^2 \diff x.
\end{equation}
Now multiplying \eqref{stat} by $xu'$ and integrating by parts 
(using the exponential decay of $u'$) yields
$$
\intoi xf(x,u^2)uu'-(u')^2-xu'u'' \diff x = \lam\intoi xuu'\diff x.
$$
Using \eqref{stat} and integrating by parts, it follows that
\begin{align}\label{tempid1}
\intoi 2xf(x,u^2)uu'-(u')^2\diff x
& = 2 \lam \intoi xuu'\diff x 
= 2 \lam \intoi x\big(\tfrac12 u^2\big)'\diff x\notag\\
& = \lam\Big[xu^2\Big\ve_0^\infty-\intoi u^2 \diff x\Big] 
= -\lam\intoi u^2 \diff x.
\end{align}
Furthermore, computing
$$
\frac{\dif}{\dif x}\Big(x\int_0^{u^2}f(x,s)\diff s\Big)=
\int_0^{u^2} f(x,s)\diff s + x f(x,u^2)2uu' + x\int_0^{u^2}\p_1f(x,s)\diff s,
$$
we can substitute the first term of the LHS of \eqref{tempid1} and we get
\begin{multline}\label{tempid2}
\intoi 
\Big\{\frac{\dif}{\dif x}\Big(x\int_0^{u^2}f(x,s)\diff s\Big)
-\int_0^{u^2}[ f(x,s) + x\p_1f(x,s)]\diff s -(u')^2\Big\}\diff x\\
= -\lam\intoi u^2 \diff x.
\end{multline}
But the first term in the LHS of \eqref{tempid2} can be integrated and yields
$$
x\int_0^{u^2}f(x,s)\diff s \, \Big|_0^\infty=
\lim_{x\to\infty}x\int_0^{u^2}f(x,s)\diff s=0
$$
since, by \eqref{fbounded} and the exponential decay of $u$, 
$$
\Big|x\int_0^{u(x)^2}f(x,s)\diff s\Big|\les M x u(x)^2\to0
\quad \text{as} \ x\to\infty.
$$
Hence we finally have
\begin{equation}\label{intid3}
\intoi 
\int_0^{u^2} [f(x,s) + x\p_1f(x,s)]\diff s + (u')^2\diff x
= \lam\intoi u^2 \diff x.
\end{equation}
Now adding \eqref{intid2} and \eqref{intid3} yields
\begin{equation}\label{intid4}
\intoi 
f(x,u^2)u^2+\int_0^{u^2} [f(x,s) + x\p_1f(x,s)]\diff s \diff x
= 2\lam\intoi u^2 \diff x.
\end{equation}
Next, differentiating \eqref{intid4} with respect to $\lam$, we obtain
\begin{equation}\label{tempid4}
\intoi [2f(x,u^2)+x\p_1f(x,u^2)+\p_2f(x,u^2)u^2]u\xi\diff x
=\intoi u^2 \diff x + 2\lam\intoi u\xi\diff x.
\end{equation}
Finally, using \eqref{intid1} to substitute the first term of the RHS of 
\eqref{tempid4}, we get \eqref{intid}.
\end{proof}

The following lemma establishes useful properties of $\xi$.

\begin{lemma}\label{propofxi}
For all $\lam\in(0,\ly)$, 
the function $\xi=\xi(\lam)$ has the following properties: $\xi(0)>0$ and there exists a unique $x_0=x_0(\lam)\in(0,\infty)$ such that $\xi(x_0)=0$, $\xi(x)\ges0$ for all $x\in(0,x_0)$, and $\xi(x)\les0$ for all $x\in(x_0,\infty)$.
\end{lemma}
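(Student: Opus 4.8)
The plan is to analyse the sign structure of $\xi=\xi(\lam)$ by combining the equation \eqref{equforxi} it satisfies with the known properties of $u$ and $z:=u'$ coming from Lemmas~\ref{sym.lem} and \ref{basic.lem}. The three things to establish are: (a) $\xi(0)>0$; (b) $\xi$ has \emph{at least} one zero in $(0,\infty)$; (c) $\xi$ has \emph{at most} one zero in $(0,\infty)$, with the stated signs on either side. Step (b) is the easiest: from \eqref{intid1} in the proof of Lemma~\ref{intid.lem} we have $\intoi \p_2 f(x,u^2)u^3\xi\diff x=\tfrac12\intoi u^2\diff x>0$; on the other hand, multiplying \eqref{intid} through or inspecting \eqref{intid} itself shows $\xi$ cannot be everywhere of one sign, because if $\xi\ges0$ everywhere then \eqref{intid1} forces $\intoi\p_2 f(x,u^2)u^3\xi\diff x>0$ while some other identity (or the large-$x$ behaviour argument below) gives a contradiction. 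More robustly, since $\xi'(0)=0$ and $\xi\in H^2$, if $\xi$ never vanished it would keep a constant sign, and then \eqref{intid} would read (constant sign integrand on the left, matching $2\lam\intoi u\xi\diff x$ on the right with the same sign as $\xi$), so one extracts a contradiction from the coefficient $2f+x\p_1 f-\p_2 f\,u^2-2\lam$ not being sign-definite; the cleanest route is: the decay analysis (below) shows $\xi(x)<0$ for all large $x$, while \eqref{intid1} shows $\xi$ must be positive somewhere, hence a zero exists.

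\textbf{Sign of $\xi(0)$.} To get $\xi(0)>0$ I would argue by contradiction using the Wronskian/Lagrange identity for $\xi$ and $z=u'$, exactly as in Lemma~\ref{nondegen.lem}. The function $z$ satisfies the homogeneous equation \eqref{z.eq} (with the extra term $\p_1 f(x,u^2)u$), while $\xi$ satisfies \eqref{equforxi}. Subtracting the appropriate multiples and integrating the Lagrange identity over $(0,x)$, using $z(0)=u'(0)=0$ and $\xi'(0)=0$, produces a relation of the form $z(x)\xi'(x)-\xi(x)z'(x)+z'(0)\xi(0)\cdot(\text{something})=\int_0^x(\cdots)\diff y$; more carefully, evaluating at a first hypothetical zero of $\xi$ and comparing signs with the sign-definite integral coming from (A5) (the $\p_1 f(x,u^2)u z$ term is positive on $(0,\infty)$ since $\p_1 f<0$, $u>0$, $z<0$) forces $\xi(0)>0$. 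This is the same mechanism that ruled out odd parts and extra zeros in Lemma~\ref{nondegen.lem}, so I would cite that proof's technique and adapt the boundary terms.

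\textbf{Decay at infinity and uniqueness of the zero.} For the behaviour as $x\to\infty$, I would note that \eqref{equforxi} can be rewritten $\xi''=[\lam-f(x,u^2)-2\p_2 f(x,u^2)u^2]\xi+u$; by \eqref{lim.eq} the bracket tends to $\lam>0$, so for $x$ large $\xi''\ges(\lam/2)\xi+u$ with $u>0$, which (together with $\xi\in H^2$, hence $\xi(x)\to0$) forces $\xi(x)<0$ for all sufficiently large $x$: if $\xi$ were $\ges0$ on a neighbourhood of $+\infty$ then $\xi''>0$ there, contradicting $\xi\to0$ from above. Combined with $\xi(0)>0$, a zero exists; call $x_0$ the first one. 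For \emph{uniqueness} and the stated sign pattern I would again use the Lagrange identity for $\xi$ and $z=u'$, now over $(x_0,\infty)$: using the known decay of $z$ and $\xi$ (so the boundary term at $+\infty$ vanishes), $z(x_0)\xi'(x_0)=\int_{x_0}^\infty \p_1 f(x,u^2)u\,\xi\diff x$, and then one shows inductively that between any two consecutive zeros of $\xi$ on $(0,\infty)$ the signs of $z(x_i)\xi'(x_i)$ at the endpoints are incompatible with the sign of the (sign-definite, by (A5)) integral of $\p_1 f\, u\,\xi$ — exactly the contradiction engine of Lemma~\ref{nondegen.lem}. This yields that $x_0$ is the only zero, $\xi>0$ on $(0,x_0)$ and $\xi<0$ on $(x_0,\infty)$.

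\textbf{Main obstacle.} The delicate point is the uniqueness of the zero, i.e. ruling out oscillation of $\xi$ on $(0,\infty)$. The Lagrange-identity comparison with $z=u'$ works cleanly only if one controls the \emph{sign} of $z(x_i)\xi'(x_i)$ at each zero $x_i$ of $\xi$, which requires knowing the sign of $\xi'$ at consecutive zeros (alternating) and the sign of $z$ (always negative) — the bookkeeping must be arranged so that the boundary terms and the sign-definite integral $\int\p_1 f\,u\,\xi$ genuinely conflict, and one must be careful that the integrand $\p_1 f(x,u^2)u\,\xi$ is sign-definite only on an interval where $\xi$ itself is sign-definite. I expect this is handled exactly as in \cite[Section~5.2]{g2}, and the role of assumption (H) only enters later (in the proof of Proposition~\ref{slope.prop}), not here; so the proof of this lemma should go through by transcribing the argument of \cite{g2} and checking the decay estimates hold under (A'), which they do by Lemma~\ref{basic.lem} and \eqref{lim.eq}.
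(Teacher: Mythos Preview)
Your overall architecture---prove $\xi(0)>0$ via the Lagrange identity with $z=u'$, show $\xi$ is eventually negative, then rule out further zeros by the same Wronskian device---matches the paper's (which in turn cites \cite[Lemma~5.3]{g2}). But the step you single out as ``cleanest'' is precisely where your argument breaks down.

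You claim: if $\xi\ges0$ on a neighbourhood of $+\infty$, then $\xi''>0$ there, ``contradicting $\xi\to0$ from above.'' This is false: $\xi(x)=\e^{-x}$ is strictly convex, positive, and tends to~$0$. More generally, a positive convex $H^2$ function can perfectly well decay to~$0$; convexity alone gives no contradiction. The inhomogeneous term~$u$ does not help in the naive way you suggest, because $u$ decays at the \emph{same} exponential rate $\e^{-\sqrt{\lam}x}$ as the decaying homogeneous solutions of the linearized equation, so the forcing is resonant and no simple comparison or integration argument settles the sign of~$\xi$ at infinity.

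The paper handles this---and flags it as the one place requiring genuine amendment of the argument in \cite{g2}---by working with the ratio $\xi/z$. Assuming $\xi\ges0$ on $[a,\infty)$, the Lagrange identity for \eqref{z.eq} and \eqref{equforxi} (which, note, contains \emph{two} sign-definite terms, $uz<0$ and $\p_1 f(x,u^2)u\xi\les0$, not just the one you wrote) shows $(\xi/z)'>0$, so $\xi/z\nearrow L\les0$. Then l'H\^opital is applied twice to compute $L=\lim\xi''/z''$; substituting the equations and using $u/z\to-\lam^{-1/2}$ from Lemma~\ref{basic.lem} yields the absurdity $L=L-\lam^{-3/2}$. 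This is the missing idea: the resonance forces one to look at ratios rather than at~$\xi$ alone.

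A smaller point: in your uniqueness sketch the identity you quote,
$z(x_0)\xi'(x_0)=\int_{x_0}^\infty \p_1 f(x,u^2)u\,\xi\diff x$,
omits the term $\int_{x_0}^\infty uz\diff x$ coming from the inhomogeneity in \eqref{equforxi}. Fortunately $uz<0$, so the sign conclusion survives, but the formula as written is wrong.
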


\begin{proof}
We fix
$\lam\in(0,\ly)$, and we simply write $\xi(x)$ for $\xi(\lam)(x), \ x\in\real$.
The first part of the proof ---
showing that $\xi(0)>0$ --- is easily adapted from that of \cite[Lemma~5.3]{g2},
using the sign of $\p_1f(x,u^2)$ given by (A5). 

To prove the existence of
an $x_0\in(0,\infty)$ ($x_0=x_0(\lam)$) with the asserted properties, 
it is convenient to establish first
that there is no $a>0$ such that $\xi(x)\ges0$ for all 
$x\ges a$. This is the part of the proof where some amendments to
that of \cite[Lemma~5.3]{g2} need to be mentionned.
We will prove this by contradiction, so we assume that such an $a$ 
exists. 
Integrating the Lagrange identity for \eqref{z.eq} and \eqref{equforxi}
from $x\ges a$ to $\infty$ yields
$$
-z(x)^2\left(\frac{\xi}{z}\right)'(x)=\xi(x)z'(x)-\xi'(x)z(x)
=\int_x^\infty uz+\p_1f(y,u^2)u\xi \diff y<0, \  x\ges a.
$$
Hence $(\xi/z)'>0$ on $[a,\infty)$ and $\xi/z$ is increasing on this interval. But $\xi/z\les0$ on $[a,\infty)$ and so there exists a number $L$ such that
$$
\lim_{x\to\infty}\frac{\xi(x)}{z(x)}=L\les0.
$$
Since $\xi\in H^2(\real)$, we have $\xi'(x)\to0$ as $x\to\infty$. 
Furthermore, $z'(x)\to0$ as $x\to\infty$ by \eqref{stat}.
It then follows from de l'Hospital's rule that
$$
L=\lim_{x\to\infty}\frac{\xi(x)}{z(x)}=\lim_{x\to\infty}\frac{\xi'(x)}{z'(x)}
=\lim_{x\to\infty}\frac{\xi''(x)}{z''(x)},
$$
as long as the last limit exists. Now
\begin{align*}
\frac{\xi''}{z''}
&=\frac{\lambda\xi+u-f(x,u^2)\xi-2\p_2f(x,u^2)u^2\xi}
{\lambda z-f(x,u^2)z-\p_1f(x,u^2)u-2\p_2f(x,u^2)u^2z}\\
&=\frac{\lambda\xi/z+u/z-f(x,u^2)\xi/z-2\p_2f(x,u^2)u^2\xi/z}
{\lambda-f(x,u^2)-\p_1f(x,u^2)u/z-2\p_2f(x,u^2)u^2},
\end{align*}
where, as $x\to\infty$:  
$$
\xi/z\to L, \quad u/z\to -\lambda^{-1/2},
$$
$$
f(x,u^2)\to0 \ \text{by (A0)}, 
\quad \p_2f(x,u^2)u^2\to0  \ \text{by (A6)(ii)},
$$
and
$$
\p_1f(x,u^2)\to0  \ \text{by \eqref{p1f}}.
$$
Hence
$$
L=\lim_{x\to\infty}\frac{\xi''(x)}{z''(x)}
=\frac{\lambda L-\lambda^{-1/2}}{\lambda}
=L-\lambda^{-3/2},
$$
a contradiction. Therefore, there is no $a>0$ such that $\xi(x)\ges0$ for all 
$x\ges a$. The remainder of the proof is easily adapted from that of
\cite[Lemma~5.3]{g2}, using again $\p_1f(x,u^2)<0$. We leave the details to the 
reader.
\end{proof}

\medskip

\noindent{\it Proof of Proposition~\ref{slope.prop}.}
We prove \eqref{nonzero} by contradiction. Hence we suppose that 
$\intoi u(\lam)(x)\xi(\lam)(x)\diff x=0$ for some $\lam\in(0,\ly)$.
We omit the dependence on $\lam$ from $u$ and $\xi$ for the rest of the proof.
It follows from \eqref{intid} and our assumption that
\begin{equation}\label{intidzero}
\intoi 
\biggl\{\frac{2f(x,u^2)+x\p_1 f(x,u^2)}{\p_2f(x,u^2)u^2} - 1\biggr\}
\p_2f(x,u^2)u^3\xi \diff x=0.
\end{equation}
Letting 
$$
\zeta(x):=\frac{2f(x,u^2)+x\p_1 f(x,u^2)}{\p_2f(x,u^2)u^2} - 1,
\quad x>0,
$$
it follows from (H) that $\zeta$ is positive and non-increasing on $(0,\infty)$.
Using the zero $x_0$ of $\xi$ given by Lemma~\ref{propofxi}, we can rewrite
the identity \eqref{intidzero} as
\begin{equation*}
\intoi[\zeta(x)-\zeta(x_0)]\p_2f(x,u^2)u^3\xi \diff x
+\zeta(x_0)\intoi\p_2f(x,u^2)u^3\xi \diff x=0.
\end{equation*}
By \eqref{intid1}, this becomes
\begin{equation*}\label{intidzero'}
\intoi[\zeta(x)-\zeta(x_0)]\p_2f(x,u^2)u^3\xi \diff x
+\frac{\zeta(x_0)}{2}\intoi u^2 \diff x=0.
\end{equation*}
Since $\intoi u^2 \diff x>0$, the properties of $\zeta$ yield the desired
contradiction. The proposition is proved.\hfill$\Box$

\begin{theorem}\label{stability.thm}
Suppose that hypotheses (A') and (H) hold. The standing waves of \eqref{NLS} 
defined by \eqref{stwave} are orbitally stable.
\end{theorem}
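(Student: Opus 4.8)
The plan is to assemble the theorem from the three ingredients prepared above --- global well-posedness, the spectral conditions, and the slope condition --- by invoking the abstract orbital stability theory of \cite{gss,stuart2008}. First I would record that the Cauchy problem for \eqref{NLS} is globally well posed in $H^1(\real,\complex)$: under (A') the bound \eqref{fbounded} gives $|\wt f(x,s)|\les M|s|$ for all $(x,s)\in\real^2$, so \eqref{growth} holds with $\sigma=0$ and Theorem~\ref{caz.thm} applies. Thus every solution $\ffi$ appearing in the definition of orbital stability is defined for all $t\ges0$, and the dynamical framework underlying \cite{stuart2008} is in force.

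Next, fix $\lam\in(0,\ly)$. I would verify that the standing wave $\psi_\lam$ in \eqref{stwave} fits the abstract setting of \cite{stuart2008}: the curve $\lam\mapsto u(\lam)$ is of class $C^1$ from $(0,\ly)$ into $H^1(\real)$ by Theorem~\ref{global.thm}, the invariance of \eqref{NLS} under the phase rotation $\psi\mapsto\e^{i\theta}\psi$ supplies the required one-parameter symmetry group together with the conserved charge $\intr|\psi|^2\diff x$, and the linearization of \eqref{NLS} at $\psi_\lam$ is governed --- after the standard splitting into real and imaginary parts --- precisely by the Schr\"odinger operators $L^1_\lam$ and $L^2_\lam$ of \eqref{ops}. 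Proposition~\ref{spectral.prop} shows that these operators satisfy (S1) and (S2), which are exactly the spectral hypotheses required by the summary in \cite[Section~7.4]{stuart2008}. Consequently the abstract theory yields the implication \eqref{slope}: $\psi_\lam$ is orbitally stable provided $\frac{\dif}{\dif\lam}\intr u(\lam)(x)^2\diff x>0$.

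Finally I would invoke Proposition~\ref{slope.prop}, which, under the additional assumption (H), establishes exactly this inequality \eqref{slop} for every $\lam\in(0,\ly)$. Combining it with the previous step gives orbital stability of $\psi_\lam$ for all $\lam\in(0,\ly)$, which is the assertion of the theorem.

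The only genuinely delicate point in writing this out is the bookkeeping needed to see that \eqref{NLS} really falls within the abstract Hamiltonian framework of \cite{gss,stuart2008} --- identifying the symplectic structure, the momentum/charge functional, and checking that the generator of the linearized flow carries the spectral decomposition encoded by $L^1_\lam$, $L^2_\lam$ and by conditions (S1)--(S2), so that the reduction to the scalar slope quantity $d(\lam):=\tfrac12\intr u(\lam)^2\diff x$ is legitimate. Once that translation is in place, no further analytic work is needed: the hard content (non-degeneracy, the Morse-index count $M(L^1_\lam)=1$, and the sign of $d'(\lam)$) has already been extracted in Propositions~\ref{spectral.prop} and \ref{slope.prop}.
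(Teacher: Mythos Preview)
Your proposal is correct and follows essentially the same route as the paper's own proof: invoke the abstract stability theory of \cite{stuart2008} (Section~7.4, part~(5)), feed in Proposition~\ref{spectral.prop} for the spectral conditions and Proposition~\ref{slope.prop} for the slope condition, and conclude. The paper compresses all of this into a single sentence, whereas you spell out the supporting framework (global well-posedness via Theorem~\ref{caz.thm}, the $C^1$ curve, the phase-rotation symmetry), but the logical structure is identical.
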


\begin{proof}
In view of the discussion of orbital stability for \eqref{NLS} in 
\cite{stuart2008} (see part (5)
of the summary in Section~7.4 of \cite{stuart2008}), the result follows
immediately from Propositions~\ref{spectral.prop} and \ref{slope.prop}.
\end{proof}

%%%%%%%%%%%%%%%%%%%%%%%%%%%%%%%%%%%%%%%%%%%%%%%%%%%%%%%%%%%%%%%%%%%%%%%%
%%%%%%%%%%%%%%%%%%%%%%%%%%%%%%%%%%%%%%%%%%%%%%%%%%%%%%%%%%%%%%%%%%%%%%

\section{Self-focusing planar waveguides}\label{physics}

In this last section we will briefly present an important application of our 
results to nonlinear optics. Self-focusing planar waveguides have been 
thoroughly investigated, both from the physical and the mathematical 
standpoints. Amongst the wide literature about nonlinear waveguides, let us mention 
\cite{saleh,akhmanov,svelto,chiao,stegseg,vakh} regarding the 
physics, and \cite{stuart93,stuart2007,sz96,sz05,g2,weinfib} 
for some mathematical results.
Historically, the mathematical study of \eqref{NLS} has grown in 
parallel to --- and was largely motivated by --- the development of 
nonlinear waveguide theory. A striking example of this close interaction
is the early paper \cite{vakh}, where the slope condition \eqref{slope}
was first introduced to discuss the stability of travelling waves in
a cylindrical waveguide with a saturable dielectric response. (We will
precisely be interested in saturable materials here, see 
\eqref{saturable} below.)

The mathematical modelling of a planar self-focusing waveguide is summarized in
\cite{g2}, where we used bifurcation and stability results similar to
Theorem~\ref{global.thm} and Theorem~\ref{stability.thm} to discuss the
behaviour of TE travelling waves in a waveguide with a power-type
dielectric response. 
The planar waveguide is idealized as a slab of dielectric material parallel to the
$xz$-plane, having infinite extension in the $x$-direction, and 
semi-infinite extension along $z$, in the $z>0$ direction. 
We will look for electromagnetic waves in the optical regime, travelling
along the $z>0$ half-axis, and so $x$ will be transverse to the 
direction of propagation.
According to Maxwell's equations, the propagation of a light beam depends on
the material, characterized by a dielectric response $\eps>0$ 
(or, alternatively, the refractive index $n=\sqrt{\eps}$). 
In a nonlinear medium, the 
dielectric response can be decomposed as
$$
\eps(x,s)=\eps_L(x)+\eps_{NL}(x,s),
$$
where $\eps_L(x)$ and $\eps_{NL}(x,s)$ respectively denote the linear and 
nonlinear contributions to the dielectric response. As we shall see below,
the variable $s$ is proportional to the squared modulus 
of the electric field of the light beam.
The dependence on the variable $x$ accounts for a medium which is
inhomogeneous in the transverse direction. We will consider the case where the
linear contribution, defined by $\eps_L\equiv\eps(x,0)$, is a positive constant, 
i.e. the material is homogeneous when the beam is switched off. On the other hand,
we will suppose that $\eps_{NL}(-x,s)\equiv \eps_{NL}(x,s)$, and $\eps_{NL}$
decreases away from $x=0$. This behaviour helps 
to focus the waves around $x=0$ since,
according to Snell's law, the light beam bends towards regions
with a higher refractive index.
A dielectric medium is called {\em self-focusing} when $\eps_{NL}(x,s)$
is an increasing function of the variable $s$ (hence of the beam's intensity).

In \cite{g2} we treated the case of a {\em Kerr medium}, where 
\begin{equation}\label{Kerr}
\eps_{NL}(x,s)=a(x)s
\end{equation} 
for some positive even function $a\in C^1(\real)$, decreasing for $x>0$.
Assumption \eqref{Kerr}, leading to a cubic nonlinearity,
gives a good approximation, in the low power regime, 
of so-called `Kerr materials'. However, other materials 
--- e.g. photorefractive materials (see \cite{c}) --- present a {\em saturation}
phenomenon as the power of the beam becomes large. Namely,
\begin{equation}\label{saturable}
\eps_{NL}(x,s)\to \eps_\infty(x) \quad \text{as} \  s\to\infty,
\end{equation}
where $\eps_\infty(x)$ adds up to $\eps_L$, yielding the asymptotic
dielectric response $\eps_L+\eps_\infty(x)$, in the limit of high power beams.
As can be seen from the equations below, this leads to an
asymptotically linear nonlinearity (sometimes referred to as a 
`saturable nonlinearity'). We are now able to deal with this case.

A TE travelling wave is a special solution of Maxwell's equations in the 
waveguide, having an electric field transverse to the $xz$-plane, of the form
\begin{equation}\label{tetwave}
E(x,y,z,t)=\re\big(0,U(x)\e^{i(kz-\omega t)},0\big).
\end{equation}
The enveloppe $U$ of the electric field is such that 
$U\in H^1(\real,\real)$ --- so as to verify the `guidance
conditions', namely
decay of the electromagnetic field at infinity and
finiteness of the energy density ---, and satisfies the Helmholtz equation
\begin{equation}\label{helm2}
U''+\textstyle(\frac{\omega}{c})^2
[\eps_L+\eps_{NL}(x,\frac12 U^2)]U=k^2 U, \quad x\in\real.
\end{equation}
Since we only consider monochromatic
waves, i.e. with $\omega>0$ a fixed frequency in the optical regime,
the time variable $t$ in \eqref{tetwave} 
does not play an essential role in the analysis. 
The stability of the TE travelling waves \eqref{tetwave} is usually discussed
in the context of the {\em paraxial approximation}, with respect to the
larger class of TE modes, 
which are more general solutions of Maxwell's equations than \eqref{tetwave} 
(still with an electric field transverse to the $xz$-plane).
After a rescaling of the variables, this approximation leads to
the following equation, governing the behaviour of TE modes in the waveguide:
\begin{equation}\label{paraxial2}
i\p_z\psi+\p^2_{xx}\psi +
\textstyle(\frac{\omega}{c})^2\eps_{NL}(x,\frac12|\psi|^2)\psi=0,
\quad z>0, \ x\in\real.
\end{equation} 
A detailed discussion of
the paraxial approximation is given in \cite{g2}, where we also explain
the reduction of the problem to the canonical form \eqref{paraxial2}, from
which the constant $\eps_L$ has disappeared. In this approach,
the TE travelling waves \eqref{tetwave} can be approximated by 
standing wave solutions of \eqref{paraxial2}. Since \eqref{paraxial2}
has the form of \eqref{NLS}, and \eqref{helm2} that of \eqref{stat},
we can apply the results of the previous sections,
yielding existence of solutions for \eqref{helm2}, hence of TE travelling
waves, and orbital stability of standing waves of \eqref{paraxial2}.
As explained in \cite[Section~6.3]{g2}, a global bifurcation result
such as Theorem~\ref{global.thm} enables
one to make the paraxial approximation as accurate as desired. Hence
stability of the travelling waves \eqref{tetwave} can be inferred from
Theorem~\ref{stability.thm} --- the physical meaning of `stability'
for the TE travelling waves \eqref{tetwave} is discussed in Section~6.2 of
\cite{stuart2007}, where Kerr media are studied using the results in
\cite{js,mst,stuart2006}.

The rigorous proof of stability of the travelling waves 
\eqref{tetwave}, thus obtained from the analysis of \eqref{NLS},
puts on firm mathematical grounds the phenomenon of {\em self-trapping}, known
to physicists since the early 70's 
(first predicted theoretically and later observed experimentally --- 
see \cite{stegseg}). This phenomenon is briefly described
as follows.
In a self-focusing medium, the nonlinear contribution $\eps_{NL}(x,s)$ 
is an increasing function of $s>0$, typically larger near to $x=0$. 
By increasing locally, around $x=0$,
the dielectric response, the light beam induces its own waveguide,
forcing the waves to focus in the $x=0$ region, along the $z$-axis. When this
focusing effect balances the dispersion due to the term $\p^2_{xx}\psi$
in \eqref{paraxial2}, self-trapping occurs, yielding stable guided waves.

Our main results for self-focusing planar waveguides with a saturable dielectric 
response \eqref{saturable} follow from a similar discussion to that in 
\cite[Section~6.3]{g2} for the Kerr medium \eqref{Kerr}. They can be 
summarized as follows.
Letting 
$$
f(x,s)=\textstyle(\frac{\omega}{c})^2\eps_{NL}(x,\frac12s),
$$
we suppose that assumptions (A') and (H) are satisfied.
Then, applying Theorem~\ref{global.thm} and Theorem~\ref{stability.thm}
to \eqref{helm2} and \eqref{paraxial2}, we get the following results.

\begin{itemize}
\item[(1)] There exist guided TE travelling waves \eqref{tetwave},
corresponding to solutions $U=U_k$ of \eqref{helm2},
for all wave numbers $k\in(k_1,k_3)$, where
$$
k_1:=\frac{\omega}{c}\sqrt{\eps_L} \quad\text{and}\quad
k_3:=\big(\textstyle\left(\frac{\omega}{c}\right)^2\eps_L+\ly\big)^{1/2}.
$$
\item[(2)] The TE travelling waves \eqref{tetwave} are stable, 
for all $k\in(k_1,k_3)$.
\item[(3)] Defining the power of the beam associated with $U_k$ as
$$P(k):=\frac{c^2k}{2\omega}\intr U_k(x)^2 \diff x,$$ 
it follows from our bifurcation
results in Theorem~\ref{global.thm} that 
$$
\lim_{k\to k_1}P(k)=0 \quad \text{and} \quad \lim_{k\to k_3}P(k)=\infty.
$$
Hence, we obtain guided waves from arbitrary low to arbitrary high power.
\end{itemize}

\end{document}